\newcommand{\cat}[1]{\ensuremath{\mathsf{#1}}}
\newcommand{\op}{\ensuremath{^\mathrm{op}}}
\newcommand{\ctens}{\mathbin{\widehat{\otimes}}}
\DeclareMathOperator{\Top}{\cat{Top}}
\DeclareMathOperator{\Alg}{\cat{Alg}}
\DeclareMathOperator{\PCAlg}{\cat{PCAlg}}
\DeclareMathOperator{\Coalg}{\cat{Coalg}}
\DeclareMathOperator{\Spec}{Spec}
\renewcommand{\O}{\mathcal{O}}
\newcommand{\Vect}{\cat{Vect}}
\DeclareMathOperator{\Hom}{Hom}
\DeclareMathOperator{\End}{End}
\DeclareMathOperator{\id}{id}
\DeclareMathOperator{\Dist}{Dist}
\newcommand{\F}{\mathcal{F}}
\newcommand{\eps}{\varepsilon}
\newcommand{\setS}{\mathcal{S}}
\newcommand{\invlim}{\varprojlim}
\newcommand{\dirlim}{\varinjlim}
\newcommand{\Z}{\mathbb{Z}}
\newcommand{\A}{\mathbb{A}}
\DeclareMathOperator{\lPC}{\!-\cat{PC}}
\DeclareMathOperator{\PC}{\cat{PC}}
\DeclareMathOperator{\CF}{\cat{CF}}
\DeclareMathOperator{\lComod}{\!-\cat{Comod}}
\newcommand{\del}{\partial}
\DeclareMathOperator{\ch}{char}
\newcommand{\separate}{\bigskip}
\numberwithin{equation}{section}
\theoremstyle{plain}
\newtheorem{theorem}[equation]{Theorem}
\newtheorem{corollary}[equation]{Corollary}
\newtheorem{lemma}[equation]{Lemma}
\newtheorem{proposition}[equation]{Proposition}
\theoremstyle{definition}
\newtheorem{example}[equation]{Example}
\newtheorem{remark}[equation]{Remark}
\begin{document}
\title{Dual coalgebras of twisted tensor products}
\author{Manuel L. Reyes}
\address{Department of Mathematics\\ University of California, Irvine\\
  340 Rowland Hall\\ Irvine, CA 92697--3875\\ USA}
\email{mreyes57@uci.edu}
\urladdr{https://www.math.uci.edu/~mreyes/}
\date{January 16, 2025}
\thanks{This work was supported by NSF grant DMS-2201273.}
\keywords{finite dual coalgebra, twisted tensor product, cotwisted tensor product, topological tensor product, continuous dual}
\subjclass[2020]{
Primary: 16S35, 
16T15, 
18M05, 
Secondary: 
16S38, 
16S40, 
16S80
}

\begin{abstract}
We investigate cases where the finite dual coalgebra of a twisted tensor product of two algebras is a cotwisted tensor product of their respective finite dual coalgebras. This is achieved by interpreting the finite dual as a topological dual; in order to prove this, we show that the continuous dual is a strong monoidal functor on linearly topologized vector spaces whose open subspaces have finite codimension. We describe a sufficient condition for the result on finite dual coalgebras to be applied, and we specialize this condition to particular constructions including Ore extensions, smash product algebras, and bitwisted tensor products of bialgebras.
\end{abstract}

\maketitle

\section{Introduction}
\label{sec:intro}

Let $k$ be an arbitrary field, and let $A$ be a $k$-algebra.
The finite dual~\cite[1.3]{heynemansweedler} of $A$ is a well-known coalgebra associated to $A$ that is defined as follows. Let $\F(A)$ denote the family of all ideals of finite $k$-codimension in $A$. Then $A^\circ$ is the subspace of the dual space $A^*$ consisting of all functionals that vanish on some ideal $I \in \F(A)$. The coalgebra structure of $A$ can be described up to isomorphism as the direct limit
\[
A^\circ \cong \dirlim_{I \in \F(A)} (A/I)^*,
\]
and the assignment $A \mapsto A^\circ$ forms a functor $(-)^\circ \colon \Alg \to \Coalg$ from $k$-algebras to $k$-coalgebras.
In~\cite{reyes:qmax}, the finite dual coalgebra was discussed from a geometric perspective as a noncommutative functorial substitute for the maximal spectrum of an algebra. 
Viewing the finite dual as a spectral invariant of an algebra highlights the important problem of finding techniques to compute $A^\circ$ for a given algebra $A$.
This has been addressed for certain examples of Hopf algebras, including for instance~\cite{takeuchi:sl2, chinmusson:duality, jahn:thesis, couto:thesis, geliu:dual, liliu:duals, browncoutojahn:dual}. However, it seems that outside of~\cite{reyes:qmax} this problem has not been studied for general algebras in the absence of a bialgebra structure. 

The goal of this paper is to provide a new tool to assist with the computation of finite dual coalgebras of algebras that arise from familiar operations.
Many common constructions to build noncommutative algebras out of two subalgebras---such as skew polynomial rings, twisted group algebras, and smash product algebras---are instances of a \emph{twisted tensor product}~\cite{csv:twisted} construction. Such an algebra $A \otimes_\tau B$ is constructed from two algebras $A$ and $B$ and a linear \emph{twisting} map $\tau \colon B \otimes A \to A \otimes B$ that satisfies certain nice properties (see Subsection~\ref{sub:crossed product} for more details).
Recent papers such as~\cite{gkm:discriminant, connergoetz:property, sheplerwitherspoon:twisted, connergoetz:classification} have studied a number of interesting aspects of this construction, and it seems that it could be useful to have spectral techniques to aid in this analysis.
Thus we have focused our attention on finding a suitable method to compute the finite dual coalgebra of a twisted tensor product.

There is a formally dual \emph{cotwisted tensor product} construction $C \otimes^\phi D$ for coalgebras $C$ and $D$ with a suitable linear \emph{cotwisting} map $\phi \colon C \otimes D \to D \otimes C$, defined in~\cite{cimz:smash} with different terminology.
Thus one might naturally wish for the finite dual to interchange these operations in a formula such as
\begin{equation}\label{eq:goal}
(A \otimes_\tau B)^\circ \cong A^\circ \otimes^{\phi} B^\circ.
\end{equation}
However it is easy to find instances where this fails, as in Example~\ref{ex:not continuous} below. Nevertheless, in this paper we are able to find a sufficient condition on the twisting map for an isomorphism of the above form to hold.

Reasoning in terms of functoriality, the isomorphism~\eqref{eq:goal} would seem more reasonable if one could extend the domain of the functor $(-)^\circ$ to a category in which the twisting map $\tau$ is a morphism, which would allow us to  define $\phi = \tau^\circ$. We employ this strategy to obtain a result of the form~\eqref{eq:goal}, with the larger category $\Top_k$ being that of linearly topologized $k$-vector spaces, where $k$ is endowed with the discrete topology. This category is discussed in Subsection~\ref{sub:monoidal} below, along with a topological tensor product $- \otimes^! -$ (whose underlying vector space is the ordinary tensor product) that makes $\Top_k$ into a symmetric monoidal category. 
With a slight abuse of notation, we denote the continuous dual functor as
\[
(-)^\circ = \Top_k(-,k) \colon \Top_k\op \to \Vect_k.
\]
Our notation is justified as follows.

Let $\CF_k$ denote the full subcategory of $\Top_k$ consisting of those topological vector spaces whose open subspaces all have finite codimension. This is in fact a monoidal subcategory of $\Top_k$.
Every algebra $A$ can be given the structure of a topological algebra when equipped with the linear topology whose open ideals are exactly those in $\F(A)$; we call this the \emph{cofinite topology} of $A$. In this way, we may view $A$ a monoid object of $(\CF_k, \otimes^!, k)$. As we explain in Subsection~\ref{sub:finite dual}, the finite dual and the topological dual are equal as subspaces of the full linear dual:
\[
A^\circ = \Top_k(A,k) \subseteq A^*.
\]  
This allows us to state the first major result of the paper.

\begin{theorem}[Theorems~\ref{thm:tensor dual}, \ref{thm:continuous equals sweedler}] \label{thm:A}
Let $(-)^\circ = \Top_k(-,k)\op$ denote the continuous dual functor as above.
\begin{enumerate}
\item The continuous dual restricts to a strong monoidal functor $(-)^\circ \colon (\CF_k, \otimes^!, k) \to (\Vect_k, \otimes, k)$, so that if $E$ and $F$ have cofinite topologies then
\[
(E \otimes F)^\circ \cong E^* \otimes F^*.
\]
\item Let $A$ be an algebra with multiplication $m \colon A \otimes A \to A$. If $A$ is endowed with its cofinite topology, then  $m$ is continuous as a morphism $A \otimes^! A \to A$, and the comultiplication of the finite dual $A^\circ$ is the composite 
\[
\begin{tikzcd}
A^\circ \ar[r, "m^\circ"] & (A \otimes^! A)^\circ \ar[r, "\sim"] & A^\circ \otimes A^\circ.
\end{tikzcd}
\]
\end{enumerate}
\end{theorem}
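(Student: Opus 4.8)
The plan is to establish part (1) by showing that the continuous dual functor is lax monoidal in a way that becomes strong when restricted to $\CF_k$, and then to deduce part (2) by tracking how the comultiplication on $A^\circ$ is built from the multiplication on $A$.

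For part (1), the starting point is the canonical linear map $E^* \otimes F^* \to (E \otimes F)^*$ sending $\xi \otimes \eta$ to the functional $x \otimes y \mapsto \xi(x)\eta(y)$; this is injective for any vector spaces $E, F$, and it restricts to a map $E^\circ \otimes F^\circ \to (E \otimes^! F)^\circ$ once one checks that a tensor of continuous functionals is continuous for the topology on $E \otimes^! F$. (This last point should be easy: if $\xi$ vanishes on an open $U \subseteq E$ and $\eta$ vanishes on an open $V \subseteq F$, then $\xi \otimes \eta$ vanishes on $U \otimes F + E \otimes V$, which is open by the definition of $\otimes^!$.) The content is surjectivity when $E, F \in \CF_k$: given a continuous $\lambda \in (E \otimes^! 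F)^\circ$, it vanishes on some open subspace of the form $U \otimes F + E \otimes V$ with $U \subseteq E$ and $V \subseteq F$ open of finite codimension, hence factors through the finite-dimensional space $(E/U) \otimes (F/V)$; there the statement reduces to the classical isomorphism $(X \otimes Y)^* \cong X^* \otimes Y^*$ for finite-dimensional $X, Y$, and one identifies $(E/U)^* \otimes (F/V)^*$ with a subspace of $E^\circ \otimes F^\circ$. After the isomorphism on objects is in hand, checking the coherence axioms (associativity, unitality, compatibility with the symmetry) amounts to diagram chases that reduce, via the factoring-through-finite-quotients argument, to the corresponding well-known coherences for finite-dimensional vector spaces; I would state these as routine and verify one representative diagram.

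For part (2), I would first observe that continuity of $m \colon A \otimes^! A \to A$ is essentially the statement that for every $I \in \F(A)$ there exist $J, K \in \F(A)$ with $m(J \otimes A + A \otimes K) \subseteq I$; taking $J = K = $ (the largest ideal of $A$ contained in $I$, or simply $I$ itself when $I$ is two-sided) works, so $A$ really is a monoid in $\CF_k$. Then the comultiplication $\Delta$ on $A^\circ$ is by definition (or by the standard description of the finite dual, e.g.\ \cite[1.3]{heynemansweedler}) the transpose $m^*$ of multiplication, corestricted to $A^\circ$ and with target identified with $A^\circ \otimes A^\circ$ inside $(A \otimes A)^*$; the only thing to check is that the identification used there is exactly the isomorphism $(A \otimes^! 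A)^\circ \xrightarrow{\sim} A^\circ \otimes A^\circ$ from part (1). This is a compatibility check between the classical construction of $A^\circ$ and the topological framework, and since both the classical map $A^\circ \otimes A^\circ \hookrightarrow (A \otimes A)^*$ and the one from part (1) are restrictions of the same canonical map $A^* \otimes A^* \to (A \otimes A)^*$, they agree on the nose.

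The main obstacle is the surjectivity half of part (1): one must argue carefully that every continuous functional on $E \otimes^! F$ is killed by an open subspace of the special ``box'' form $U \otimes F + E \otimes V$, which is where the hypothesis defining $\CF_k$ (all open subspaces have finite codimension) does real work, and then package the resulting finite-dimensional isomorphism functorially so that the coherence diagrams follow for free rather than requiring separate verification. Everything after that—continuity of $m$, and the identification of $\Delta$ with $m^\circ$ composed with the isomorphism—is bookkeeping, provided one has set up the definitions of $\otimes^!$ and the cofinite topology precisely as in Subsections~\ref{sub:monoidal} and~\ref{sub:finite dual}.
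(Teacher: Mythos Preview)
Your proposal is correct and follows essentially the same route as the paper: construct the natural embedding $E^\circ \otimes F^\circ \hookrightarrow (E \otimes^! F)^\circ$, prove surjectivity on $\CF_k$ by factoring a continuous functional through a finite-dimensional quotient $(E/U)\otimes(F/V)$, verify coherence by a pure-tensor chase, and then read off part~(2) by observing that $m$ is continuous (taking $J=K=I$ works since elements of $\F(A)$ are two-sided) and that the classical comultiplication on $A^\circ$ is literally the restriction of $m^*$ through the same canonical embedding. One small clarification on your ``main obstacle'' paragraph: the fact that $\ker\lambda$ contains a box $U\otimes F + E\otimes V$ is immediate from the \emph{definition} of the topology on $E\otimes^! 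F$ and needs no hypothesis; the $\CF_k$ assumption enters only afterward, to guarantee that $E/U$ and $F/V$ are finite-dimensional.
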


This topological point of view allows us to formulate the second main result of the paper, which achieves our goal of finding an isomorphism of the form~\eqref{eq:goal}. After reviewing twisted tensor products and cotwisted tensor products, we are able to establish the following in Section~\ref{sec:twisted tensor}.

\begin{theorem}[Theorem~\ref{thm:twisted dual}] \label{thm:B}
Let $A$ and $B$ be algebras with a twisting map $\tau \colon B \otimes A \to A \otimes B$. Endow $A$ and $B$ with their cofinite topologies. If the twisting map is continuous as a map $B \otimes^! A \to A \otimes^! B$, then the linear map $A^\circ \otimes B^\circ \to B^\circ \otimes A^\circ$ induced by $\tau^\circ$ is a cotwisting map, and we have an isomorphism of coalgebras
\[
(A \otimes_\tau B)^\circ \cong A^\circ \otimes^{\tau^\circ} B^\circ.
\]
\end{theorem}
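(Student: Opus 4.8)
The plan is to exhibit the twisting map $\tau$ as a morphism in the monoidal category $(\CF_k, \otimes^!, k)$ and then transport the situation through the strong monoidal continuous dual functor of Theorem~\ref{thm:A}. We regard $A$ and $B$ as monoids in $\CF_k$ via their cofinite topologies. The first step is to note that the multiplication $m_\tau = (m_A \otimes m_B) \circ (\id_A \otimes \tau \otimes \id_B)$ of $A \otimes_\tau B$, composed with the coherence isomorphism for $\otimes^!$, is continuous as a map $(A \otimes^! B) \otimes^! (A \otimes^! B) \to A \otimes^! B$: indeed $m_A$ and $m_B$ are continuous by Theorem~\ref{thm:A}(2), $\tau$ is continuous by hypothesis, and $\otimes^!$ is a bifunctor on $\CF_k$, so all of these are morphisms of $\CF_k$ and hence so is their composite. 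The unit $\eta_A \otimes \eta_B$ is continuous for trivial reasons. Thus $A \otimes_\tau B$, with the $\otimes^!$-topology on its underlying space $A \otimes^! B$, becomes a monoid in $\CF_k$.

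The point at which I expect the real work to lie is a comparison of topologies: the $\otimes^!$-topology on $A \otimes^! B$ should coincide with the cofinite topology of the \emph{algebra} $A \otimes_\tau B$, so that $(A \otimes_\tau B)^\circ$ is genuinely the continuous dual of the monoid just constructed. One inclusion needs no hypothesis on $\tau$: writing $R = A \otimes_\tau B$, since $a \mapsto a \otimes 1$ and $b \mapsto 1 \otimes b$ are algebra homomorphisms into $R$ and $(a \otimes 1)(1 \otimes b) = a \otimes b$, any cofinite ideal $K$ of $R$ contains $I \otimes B + A \otimes J$, where $I \in \F(A)$ and $J \in \F(B)$ are the preimages of $K$; hence the cofinite topology is coarser than the $\otimes^!$-topology. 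For the reverse inclusion, given $I \in \F(A)$ and $J \in \F(B)$, write $W = I \otimes B + A \otimes J$. The triple multiplication $R \otimes^! R \otimes^! R \to R$, $x \otimes y \otimes z \mapsto xyz$, is continuous, since it is $m_\tau \circ (m_\tau \otimes \id_R)$, a composite of morphisms of $\CF_k$; hence the preimage of the open subspace $W$ is an open subspace of $R \otimes^! R \otimes^! R$, and so contains a basic open subspace and in particular the summand $(A \otimes B) \otimes U \otimes (A \otimes B)$ for some cofinite subspace $U$ of $A \otimes B$. This says exactly that $R \cdot U \cdot R \subseteq W$, so $U$ lies inside $N := \{x \in R : RxR \subseteq W\}$, which is a two-sided ideal of $R$, is contained in $W$ (as $1 \in R$), and has finite codimension because it contains the cofinite subspace $U$. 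Thus every basic $\otimes^!$-open subspace contains a cofinite ideal, and the two topologies agree.

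With the topologies identified, Theorem~\ref{thm:A} shows that $(A \otimes_\tau B)^\circ$ is a coalgebra whose underlying vector space is $(A \otimes^! B)^\circ \cong A^\circ \otimes B^\circ$ and whose comultiplication is $(m_\tau)^\circ$ followed by this isomorphism. Applying the contravariant strong monoidal functor $(-)^\circ$ to the factorization $m_\tau = (m_A \otimes m_B) \circ (\id_A \otimes \tau \otimes \id_B)$, and using $(m_A)^\circ = \Delta_{A^\circ}$ and $(m_B)^\circ = \Delta_{B^\circ}$ from Theorem~\ref{thm:A}(2) together with the coherence for $\otimes^!$, identifies this comultiplication with $(\id_{A^\circ} \otimes \tau^\circ \otimes \id_{B^\circ}) \circ (\Delta_{A^\circ} \otimes \Delta_{B^\circ})$, which is exactly the comultiplication of the cotwisted tensor product for the map $\phi := \tau^\circ$; dually, the counit, being $(\eta_A \otimes \eta_B)^\circ$, equals $\eps_{A^\circ} \otimes \eps_{B^\circ}$. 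It remains to check that $\phi = \tau^\circ$ is a cotwisting map, which I would do by applying $(-)^\circ$ to each of the defining identities for the twisting map $\tau$ (the two unit conditions and the two hexagon-type conditions): every map appearing in these identities---namely $\tau$, $m_A$, $m_B$, the units, the identity maps, and their $\otimes^!$-tensors---is a morphism of $\CF_k$, so the functor carries each identity to the formally dual identity for $\phi$ after rewriting $(X \otimes^! Y)^\circ$ as $X^\circ \otimes Y^\circ$ via the monoidal structure, and these formally dual identities are precisely the cotwisting-map axioms. Assembling these observations yields the coalgebra isomorphism $(A \otimes_\tau B)^\circ \cong A^\circ \otimes^{\tau^\circ} B^\circ$.
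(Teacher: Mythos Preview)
Your proof is correct and follows the same overall strategy as the paper: view $A\otimes_\tau B$ as a monoid in $(\CF_k,\otimes^!,k)$, identify the $\otimes^!$-topology on $A\otimes B$ with the cofinite topology of the algebra $A\otimes_\tau B$, and then push everything through the strong monoidal functor $(-)^\circ$, dualizing the normal/multiplicative identities for $\tau$ to the conormal/comultiplicative identities for $\tau^\circ$.

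The one genuine difference is in the topology comparison. The paper isolates this as Proposition~\ref{prop:topological conditions} and proves $(\mathrm{i})\Rightarrow(\mathrm{ii})$ by considering $\theta=\tau\circ\sigma_{A,B}$, pulling back the basic open $N=I\otimes B+A\otimes J$ along $\theta$ to obtain $N_0=I_0\otimes B+A\otimes J_0$, and then arguing by hand that the two-sided ideal of $A\otimes_\tau B$ generated by $N_0$ stays inside $N$. Your argument is different and arguably slicker: you observe that the triple product $R\otimes^! R\otimes^! R\to R$ is continuous (as a composite of $\CF_k$-morphisms), so the preimage of $W=I\otimes B+A\otimes J$ contains a basic open, and the middle slot of that basic open gives a cofinite subspace $U$ with $RUR\subseteq W$; then $N=\{x:RxR\subseteq W\}$ is a cofinite two-sided ideal contained in $W$. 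This avoids the explicit ideal-generation computation the paper does, at the cost of invoking the associativity of $\otimes^!$ and the functoriality of the triple product. Both approaches are fine; yours generalizes more readily, while the paper's makes the role of $\tau$ itself more visible.
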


\separate

In Section~\ref{sec:applications} we describe a sufficient condition (Theorem~\ref{thm:centralize}) for the twisting map $\tau$ above to be continuous, so that Theorem~\ref{thm:B} applies. We then describe situations where this applies to specific constructions such as Ore extensions, smash product algebras, and bitwisted tensor products of bialgebras. 
Finally, in Section~\ref{sec:examples}, we describe several examples of twisted tensor products of the form $k[x] \otimes_\tau k[y]$ whose dual coalgebras decompose as cotwisted tensor products. This includes the quantum plane and quantized Weyl algebras at roots of unity, as well as the Jordan plane and Weyl algebra in positive characteristic.

\separate

\textbf{Acknowledgments.}
I thank Hongdi Huang for a question that led to Corollary~\ref{cor:crossed bialgebra}. Additionally, I am deeply grateful to an anonymous referee for a generous number of corrections, suggestions, and references to the literature that have greatly improved the readability of this paper.

\section{Continuous duality between algebras and coalgebras}\label{sec:duality}

This section describes some elements of duality theory relating algebras and coalgebras. This includes discussions of linearly topologized vector spaces, topologized tensor products, and continuous duality. 

Let $k$ be a field, which is arbitrary throughout this paper unless explicitly stated otherwise. Unadorned tensor symbols $- \otimes -$ denote tensor over $k$. In this paper, all algebras are unital and associative, all coalgebras are counital and coassociative, and morphisms of these objects preserve the (co)unit. 
We let $\Alg = \Alg_k$ denote the category of $k$-algebras and $\Coalg = \Coalg_k$ denote the category of $k$-coalgebras.

\subsection{Continuous duality and topological tensor products}\label{sub:monoidal}

We begin by recalling some elements of topological rings and their modules.
Let $R$ be a topological ring. A topological left $R$-module $M$ is \emph{linearly topologized} if it has a neighborhood basis of zero consisting of open submodules. We say $R$ is \emph{left linearly topologized} if ${}_R R$ is a linearly topologized module, and \emph{right linearly topologized rings} are similarly defined. We say that $R$ is \emph{linearly topologized} if it is both left and right linearly topologized, or equivalently~\cite[Remark~2.19]{imr:diagonal}, if it has a neighborhood basis of zero consisting of open ideals.

We consider our field $k$ as a topological field with the discrete topology. Let $R$ be a topological $k$-algebra.
Recall that a topological left $R$-module $M$ is said to be \emph{pseudocompact} if it satisfies the following equivalent conditions:
\begin{itemize}
\item $M$ is separated (i.e., Hausdorff), complete, and has a neighborhood basis of zero consisting of open submodules of finite codimension;
\item $M$ is an inverse limit (in the category of topological modules) of discrete,
finite-dimensional modules;
\item The natural homomorphism $M\to \varprojlim M/N$, where $N$ ranges over the open submodules 
of finite codimension in $M$, is an isomorphism of topological modules.
\end{itemize}
The topology on such a module $M$ is induced by viewing it as a subspace $M \cong \varprojlim M/N \subseteq \prod M /N$ of the product space $\prod M/N$ where again $N$ ranges over the open submodules of finite codimension, and each finite-dimensional $M/N$ is endowed with the discrete topology~\cite[Lemma~2.17]{imr:diagonal}.

Let $R \lPC$ denote the category of pseudocompact left $R$-modules with continuous homomorphisms. In particular, taking $R = k$, we have the category $\PC_k := k \lPC$ of \emph{pseudocompact vector spaces}. These are topological vector spaces that are inverse limits of finite-dimensional discrete vector spaces.

We let $\Top_k$ denote the category of linearly topologized $k$-vector spaces with continuous $k$-linear maps. Then $\PC_k$ forms a full subcategory of $\Top_k$. We will identify $\Vect_k$ with the full subcategory of discrete topological vector spaces within $\Top_k$. We say that a linearly topologized vector space $E$ is \emph{cofinite} if its open subspaces all have finite codimension in $E$. We let $\CF_k$ denote the full subcategory of $\Top_k$ consisting of cofinite spaces. Then we have the following inclusions of full subcategories:
\[
\PC_k \subseteq \CF_k \subseteq \Top_k.
\]

The completion of a separated topological vector space is a well-known construction~\cite[Section~7]{warner}, but we will require a straightforward extension to spaces that are not necessarily separated. For a linearly topologized space $E$, we let $\setS(E)$ denote the family of open subspaces of $E$, which forms a neighborhood basis of zero in $E$. The \emph{separation} of $E$,
\[
E_s := \left. E \middle/ \overline{\{0\}} \right. 
= \left. E \middle/ \left( \bigcap \setS(E) \right) \right.,
\]
is universal among all separated spaces with a continuous surjection from $E$. Then for such $E$, we can define the \emph{completion} equivalently as the usual completion of its separation, or as a colimit in $\Top_k$,
\[
\widehat{E} := \invlim_{W \in \setS(E)} E/W = \widehat{E_s}.
\]
It is straightforward to verify that the assignments $E \mapsto E_s$ and $E \mapsto \widehat{E}$ both naturally extend to endofunctors of $\Top_k$. See also~\cite[Section~1]{positselski:exact} for many more details.

Let $E$ and $F$ be linearly topologized vector spaces. Following~\cite[Section~12]{positselski:exact}, let $E \otimes^! F$ denote the vector space $E \otimes F$ equipped with the linear topology whose open subspaces are the subspaces $W \subseteq E \otimes F$ for which there exist open subspaces $E_0 \in \setS(E)$ and $F_0 \in \setS(F)$ such that 
\[
E_0 \otimes F + F \otimes E_0 \subseteq W.
\]
Note that the basic open sets described above are precisely the kernels of the natural surjections $E \otimes F \twoheadrightarrow (E/E_0) \otimes (F/F_0)$. It follows immediately that if $E$ and $F$ both have cofinite topologies, then the same is true for $E \otimes^! F$. In this way we obtain a monoidal category structure $(\CF_k, \otimes^!, k)$ on the category of cofinite linearly topologized spaces.
 
Now let $E \ctens F$ denote the completion of $E \otimes^! F$. While this construction is defined for general objects of $\Top_k$ that are not necessarly separated or complete, one can readily verify that
\[
E \ctens F \cong \invlim (E/E' \otimes F/F') \cong \widehat{E} \ctens \widehat{F},
\]
where $E'$ and $F'$ range over the open subspaces of $E$ and $F$, respectively. Furthermore, if $E$ and $F$ are pseudocompact, the equation above makes it clear that the same is true for $E \ctens F$, and we obtain the \emph{completed tensor product} defined in~\cite{brumer:pseudocompact}. There it was characterized by the following universal property:
the pseudocompact space $E \ctens F$ is equipped with a continuous bilinear map $B \colon E \times F \to E \ctens F$ (where $B(v,w) = v \ctens w$) such that every continuous bilinear map $E \times F \to X$ to a pseudocompact $k$-vector space $X$ factors uniquely through a continuous linear map $\phi$ as follows:
\[
E \times F \xrightarrow{B} E \ctens F \xrightarrow{\phi} X.
\]

We now turn to the relationship between the tensor products discussed above and dual space functors. 
Because $\Top_k$ is a $k$-linear category, we have a $k$-linear \emph{continuous dual} functor
\[
(-)^\circ := \Top_k(-,k) \colon \Top_k\op \to \Vect_k.
\]
The following key result describes how the continuous dual plays well with the topological tensor $-\otimes^! -$, especially when restricted to the category $\CF_k$ of spaces with cofinite topology. The definitions of lax and strong monoidal functors can be found in~\cite[XI.2]{maclane}, noting that MacLane uses the term ``monoidal'' instead of ``lax monoidal.''

\begin{theorem}\label{thm:tensor dual}
The continuous dual forms a lax monoidal functor
\[
(-)^\circ \colon (\Top_k, \otimes^!, k)\op \to (\Vect_k, \otimes, k),
\]
which restricts to a strong monoidal functor
\[
(-)^\circ \colon (\CF_k, \otimes^!, k)\op \to (\Vect_k, \otimes, k).
\]
\end{theorem}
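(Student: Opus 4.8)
The plan is to write down the lax monoidal structure by hand and then check that its structure maps become isomorphisms when restricted to $\CF_k$. Since $k$ carries the discrete topology we have $k^\circ = \Top_k(k,k) = k$, and we take the identity as the unit constraint. For objects $E, F$ of $\Top_k$, define $\mu_{E,F} \colon E^\circ \otimes F^\circ \to (E \otimes^! F)^\circ$ as the restriction of the standard map $E^* \otimes F^* \to (E \otimes F)^*$ sending $f \otimes g$ to the functional $v \otimes w \mapsto f(v) g(w)$. The first point to verify is that this lands in the continuous dual: if $f$ and $g$ are continuous then $\ker f$ and $\ker g$ are open subspaces (preimages of $0$ in the discrete field $k$), so $f \otimes g$ annihilates the open subspace $\ker f \otimes F + E \otimes \ker g$ of $E \otimes^! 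F$ and hence factors through the discrete quotient $(E/\ker f) \otimes (F/\ker g)$; thus $f \otimes g$ is continuous. Naturality of $\mu$ in both variables is inherited from the corresponding map on full duals. Finally, the associativity and unit coherence diagrams commute because, after unwinding the definitions, both composites in each diagram carry $f \otimes g \otimes h$ to the functional $v \otimes w \otimes u \mapsto f(v) g(w) h(u)$; here one uses that the associativity and unit constraints of $(\Top_k, \otimes^!, k)$ are, on underlying vector spaces, those of $(\Vect_k, \otimes, k)$. This establishes the lax monoidal structure.

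For the strong monoidal statement, I would prove that $\mu_{E,F}$ is an isomorphism whenever $E, F \in \CF_k$ by realizing it as a directed colimit of the finite-dimensional duality isomorphism. The ingredients are: (i) for any $E \in \Top_k$, every continuous functional kills an open subspace, so $E^\circ = \bigcup_{E_0} (E/E_0)^*$ is the directed union over the open subspaces $E_0$ (ordered by reverse inclusion), each $(E/E_0)^*$ sitting inside $E^*$ via the surjection $E \twoheadrightarrow E/E_0$; (ii) the open subspaces of the form $E_0 \otimes F + E \otimes F_0$ are cofinal among all open subspaces of $E \otimes^! F$, and $(E \otimes F)/(E_0 \otimes F + E \otimes F_0) \cong (E/E_0) \otimes (F/F_0)$, whence $(E \otimes^! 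F)^\circ = \bigcup_{E_0, F_0} \bigl( (E/E_0) \otimes (F/F_0) \bigr)^*$; (iii) the tensor product of vector spaces commutes with directed unions of subspaces, so $E^\circ \otimes F^\circ = \bigcup_{E_0, F_0} (E/E_0)^* \otimes (F/F_0)^*$, the index set $\{(E_0, F_0)\}$ being directed under componentwise reverse inclusion. Under these identifications $\mu_{E,F}$ is the colimit over $(E_0, F_0)$ of the canonical maps $(E/E_0)^* \otimes (F/F_0)^* \to \bigl( (E/E_0) \otimes (F/F_0) \bigr)^*$.

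Now the cofiniteness hypothesis is used exactly once: when $E$ and $F$ are cofinite, each $E/E_0$ and $F/F_0$ is finite-dimensional, so every one of the canonical maps above is an isomorphism, and a directed colimit of isomorphisms is an isomorphism; hence $\mu_{E,F}$ is an isomorphism and the restricted functor is strong monoidal. I expect the only delicate part of the argument to be the bookkeeping in (ii) and (iii) — the cofinality claim, the identification of the quotient, and checking that the various inclusions fit together into a single colimit diagram compatible with $\mu_{E,F}$. None of this is deep, but it is where a sloppy argument could slip, and it also pinpoints why one cannot expect strongness on all of $\Top_k$: the maps $(E/E_0)^* \otimes (F/F_0)^* \to \bigl( (E/E_0) \otimes (F/F_0) \bigr)^*$ fail to be surjective once the relevant quotients are infinite-dimensional.
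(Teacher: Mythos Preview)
Your proposal is correct and follows essentially the same approach as the paper: the lax monoidal structure is defined and verified identically, and for strongness on $\CF_k$ both arguments reduce to the finite-dimensional duality isomorphism $(E/E_0)^* \otimes (F/F_0)^* \cong \bigl((E/E_0) \otimes (F/F_0)\bigr)^*$. The only difference is organizational---the paper argues surjectivity elementwise (given $\varphi$, choose one pair $(E_0,F_0)$ and factor), whereas you package the same computation as a directed colimit of isomorphisms; neither version involves any idea the other lacks.
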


\begin{proof}
First note that duality preserves the monoidal unit as $k^\circ = k^* \cong k$.
Let $E$ and $F$ be linearly topologized vector spaces. 
Consider the natural embedding for the discrete dual spaces $E^* \otimes F^* \hookrightarrow (E \otimes F)^*$, where a pure tensor $\phi \otimes \psi \in E^* \otimes F^*$ acts on a pure tensor $e \otimes f \in E \otimes F$ by
\[
(\phi \otimes \psi)(e \otimes f) = \phi(e) \psi(f).
\]
If $\phi \in E^\circ$ and $\psi \in F^\circ$ both have open kernels, note that 
\[
\ker \phi \otimes F + E \otimes \ker \psi \subseteq \ker(\phi \otimes \psi)
\]
implies that $\ker(\phi \otimes \psi)$ is open in $E \otimes^! F$, so that $\phi \otimes \psi \in (E \otimes^! F)^\circ$. Thus the natural embedding for discrete dual spaces restricts to an analogous embedding for continuous dual spaces:
\[
\xymatrix{
E^* \otimes F^* \ar@{^{(}->}[r] & (E \otimes F)^* \\
E^\circ \otimes F^\circ \ar@{^{(}->}[u] \ar@{^{(}->}[r]^{\Phi_{E,F}} & (E \otimes^! F)^\circ \ar@{^{(}->}[u]
}
\]
To show that this natural embedding
\begin{equation}\label{eq:embedding}
\Phi_{E,F} \colon E^\circ \otimes F^\circ \hookrightarrow (E \otimes^! F)^\circ
\end{equation}
makes $(-)^\circ$ into a lax monoidal functor, we must check the coherence axioms~\cite[XI.2(3,4)]{maclane}.
The left and right unitor diagrams are easily seen to commute:
\[
\xymatrix{
k \otimes E^\circ \ar[r]^\sim \ar[d]^[@!-90]{\sim} & E^\circ & E^\circ \otimes k \ar[r]^\sim \ar[d]^[@!-90]{\sim} & E^\circ \\
k^\circ \otimes E^\circ \ar@{^{(}->}[r] & (k \otimes^! E)^\circ \ar[u]_[@!-90]{\sim} & E^\circ \otimes k^\circ \ar@{^{(}->}[r] & (E \otimes^! k)^\circ \ar[u]_[@!-90]{\sim}
}
\]
The coherence diagram for the associator takes the following form:
\begin{equation}\label{eq:coherence}
\begin{split}
\xymatrix{
E^\circ \otimes (F^\circ \otimes G^\circ) \ar[d]_{\id_{E^\circ} \otimes \Phi_{F,G}} \ar[r]^{\alpha_1} & (E^\circ \otimes F^\circ) \otimes G^\circ \ar[d]^{\Phi_{E,F} \otimes \id_{G^\circ}} \\
E^\circ \otimes (F \otimes^! G)^\circ \ar[d]_{\Phi_{E, F \otimes^! G}} & (E \otimes^! F)^\circ \otimes G^\circ \ar[d]^{\Phi_{E \otimes^! F,G}} \\
(E \otimes^! (F \otimes^! G))^\circ \ar[r]^{\alpha_2^\circ} & ((E \otimes^! F) \otimes^! G)^\circ 
}
\end{split}
\end{equation}
where the morphisms $\alpha_1$ in $\Vect_k$ and $\alpha_2$ in $\Top_k$ are the appropriate associators. To verify that this diagram commutes, we may take $(\phi_1, \phi_2, \phi_3) \in E^\circ \times F^\circ \times G^\circ$ and chase the pure tensor $\phi_1 \otimes (\phi_2 \otimes \phi_3) \in E^\circ \otimes (F^\circ \otimes G^\circ)$ around the diagram. The composite morphism down the left column interprets $\phi_1 \otimes (\phi_2 \otimes \phi_3)$ as a functional on $E \otimes^! (F \otimes^! G)$ by the action
\[
(\phi_1 \otimes (\phi_2 \otimes \phi_3))(x \otimes (y \otimes z)) = \phi_1(x) \phi_2(y) \phi_3(z).
\]
The actions of $\alpha_1$ and $\alpha_2$ simply regroup the tensor factors, and the composite down the right column of the diagram acts in a completely analogous way. In this way one can check that the diagram commutes when restricted to pure tensors, so by linearity it commutes in $\Vect_k$.
Thus $(-)^\circ$ is a lax monoidal functor $(\Top_k, \otimes^!, k)\op \to (\Vect_k, \otimes, k)$. 

To show that $(-)^\circ$ forms a strict monoidal functor on $(\CF_k, \otimes^!, k)$, we assume that $E$ and $F$ have cofinite topologies, and we show that the embedding $\Phi_{E,F}$ is surjective. Thus let $\varphi \in (E \otimes^! F)^\circ$. Then there exist open subspaces $E_0 \subseteq E$ and $F_0 \subseteq F$ such that $E^0 \otimes F + E \otimes F_0 \subseteq \ker \varphi$. Thus $\varphi$ factors as
\[
E \otimes F \twoheadrightarrow E/E_0 \otimes F/F_0 \xrightarrow{\overline{\varphi}} k.
\]
Because $E$ and $F$ are cofinite, both $E_0$ and $F_0$ both have finite codimension, and we have
\[
\overline{\varphi} \in (E/E_0 \otimes F/F_0)^* \cong (E/E_0)^* \otimes (F/F_0)^*.
\]
So we may in fact write 
\[
\varphi = \sum_{i=1}^n \phi_i \otimes \psi_i \in E^* \otimes F^*
\]
 where each $\ker \phi_i \supseteq E_0$ and $\ker \psi_i \supseteq F_0$, making each $\phi_i \in E^\circ$ and $\psi_i \in F^\circ$. So in this case the map~\eqref{eq:embedding} is an isomorphism as desired.
\end{proof}

In the opposite direction, the discrete dual space gives a $k$-linear functor
\[
(-)^* := \Hom_k(-,k) \colon \Vect_k\op \to \PC_k
\]
where the dual $V^* = \Hom(V,k)$ of a vector space $V$ is equipped with the \emph{finite topology}: the linear topology whose basic open subspaces are the annihilators $X^\perp = \{\phi \in V^* \mid \phi(X) = 0\}$ of all finite-dimensional subspaces $X \subseteq V$. 
Restricting the continuous dual to the subcategory $\PC_k$ of $\Top_k$, it is well known that these two functors are mutually quasi-inverse, yielding a duality between $\Vect_k$ and $\PC_k$; see~\cite[IV.4]{gabriel} or~\cite[Proposition~2.6]{simson}. 

We will show in Corollary~\ref{cor:monoidal duality} below that this duality can be enhanced to a \emph{strong monoidal} duality of monoidal categories with appropriate tensor products. 
This will provide an alternative explanation for another well-known duality between coalgebras and pseudocompact algebras~\cite[Theorem~3.6(d)]{simson}.
Recall that a \emph{pseudocompact algebra} $A$ is a linearly topologized algebra that is separated, complete, and has a neighood basis of zero consisting of open ideals having finite codimension, or equivalently, if it is the topological inverse limit of finite-dimensional discrete algebras. We let $\PCAlg$ denote the category of pseudocompact $k$-algebras with continuous algebra homomorphisms.

Let $(C,\Delta,\epsilon)$ be a coalgebra. As is well known~\cite[Section~4.0]{sweedler:hopf}, the 
dual vector space $C^*$ is endowed with the structure of a $k$-algebra, with the 
\emph{convolution} product induced by restricting the dual of the comultiplication 
$\Delta^* \colon (C \otimes C)^* \to C^*$ to the subspace 
$C^* \otimes C^* \hookrightarrow (C \otimes C)^*$, and having unit $\epsilon \in C^*$.
Explicitly, the convolution product is given as follows: given $f,g \in C^*$ and an
element $q \in C$ with $\Delta(q) = \sum q_{(1)} \otimes q_{(2)}$ written in Sweedler notation, their
convolution $fg \in C^*$ is the element that acts via
\[
fg(q) = \sum f(q_{(1)}) g(q_{(2)}).
\]
Thanks to the fundamental theorem of coalgebras~\cite[Theorem~2.2.3]{radford:hopf}, $C$ is the directed union of its finite-dimensional subcoalgebras. Thus the dual algebra is an inverse limit $C^* \cong \varprojlim S^*$, where $S$ ranges over the finite-dimensional subcoalgebras $S \subseteq C$. By endowing each of the finite-dimensional algebras $S^*$ with the discrete topology, we may view the inverse limit above in the category of topological algebras. The resulting topology is the \emph{finite topology} on $C^*$. It is evident from the construction that $C^*$ is a pseudocompact $k$-algebra.

\begin{corollary}\label{cor:monoidal duality}
There is a strong monoidal duality between $(\Vect_k, \otimes, k)$ and $(\PC_k, \ctens, k)$ given by the dual space functors
\begin{align*}
(-)^\circ \colon \PC_k\op &\to \Vect_k, \\
(-)^* \colon \Vect_k\op &\to \PC.
\end{align*}
These functors induce further dualities between:
\begin{enumerate}
\item $\Coalg$ and $\cat{PCAlg}$, 
\item $C \lComod$ and $C^* \lPC$ for any coalgebra $C$.
\end{enumerate}
\end{corollary}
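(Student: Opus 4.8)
The plan is to upgrade the classical duality $\Vect_k \simeq \PC_k\op$ (see~\cite[IV.4]{gabriel} or~\cite[Proposition~2.6]{simson}) to a \emph{strong monoidal} duality, and then to extract the two further dualities from the general principle that a strong monoidal contravariant equivalence induces equivalences between the associated categories of monoids/comonoids and of modules/comodules.

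\emph{Step 1 (the continuous dual is strong monoidal on $(\PC_k, \ctens, k)$).} The crucial observation is that $(-)^\circ$ is insensitive to completion: a continuous functional $E \to k$ has open kernel (as $k$ is discrete), so it factors through $E_s$ and through $\widehat{E}$, while $E$ has dense image in $\widehat{E}$ and $k$ is Hausdorff; hence the canonical map $(\widehat{E})^\circ \to E^\circ$ is a natural isomorphism. Combining this with $E \ctens F = \widehat{E \otimes^! F}$ and Theorem~\ref{thm:tensor dual} (recall $\PC_k \subseteq \CF_k$) gives, for $E,F \in \PC_k$, a natural isomorphism
\[
(E \ctens F)^\circ \;\cong\; (E \otimes^! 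F)^\circ \;\xrightarrow{\ \sim\ }\; E^\circ \otimes F^\circ
\]
whose second arrow is the comparison map of Theorem~\ref{thm:tensor dual}. Its coherence axioms reduce to those already verified there, together with the routine monoidality and naturality of the completion maps, and I would record this reduction rather than re-chase the hexagon. Thus $(-)^\circ \colon (\PC_k, \ctens, k)\op \to (\Vect_k, \otimes, k)$ is strong monoidal; being an equivalence, its quasi-inverse $(-)^*$ inherits a canonical strong monoidal structure by the standard transport of structure for monoidal equivalences --- equivalently, one checks directly the natural isomorphism $V^* \ctens W^* \cong \invlim_{X,Y} (X^* \otimes Y^*) \cong \invlim_{X,Y} (X \otimes Y)^* \cong (V \otimes W)^*$, over finite-dimensional $X \subseteq V$ and $Y \subseteq W$, and confirms that the finite topologies agree.

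\emph{Step 2 (the duality $\Coalg \simeq \PCAlg\op$).} By the universal property of $\ctens$ recalled before the statement, a continuous associative unital bilinear multiplication on a pseudocompact space $A$ is precisely a monoid structure $A \ctens A \to A$ on $A$ in $(\PC_k, \ctens, k)$, and continuous homomorphisms correspond to monoid morphisms; thus $\PCAlg \simeq \mathrm{Mon}(\PC_k, \ctens, k)$. On the other side, comonoids in $(\Vect_k, \otimes, k)$ are by definition coalgebras. A strong monoidal contravariant equivalence carries comonoids to monoids (a comultiplication $\Delta$ being sent to the multiplication obtained by composing $(-)^*(\Delta)$ with the structure isomorphism) and carries morphisms accordingly, and likewise in the other direction; hence Step~1 restricts to a duality $\Coalg \simeq \PCAlg\op$. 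One should also note that the algebra structure thereby produced on $C^*$ is the convolution product recalled above --- this is immediate from the explicit formula for the comparison map.

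\emph{Step 3 (the duality $C \lComod \simeq (C^* \lPC)\op$) and the main obstacle.} Fix a coalgebra $C$. As in Step~2, the universal property of $\ctens$ identifies $C^* \lPC$ with the category of left modules over the monoid $C^*$ inside $(\PC_k, \ctens, k)$, while $C \lComod$ is the category of left comodules over the comonoid $C$ in $(\Vect_k, \otimes, k)$. Since the strong monoidal duality of Step~1 sends the comonoid $C$ to the monoid $C^*$, it induces a duality between left $C$-comodules and left $C^*$-modules: a coaction $M \to C \otimes M$ dualizes --- via the comparison map and the identification $(C^*)^\circ \cong C$ --- to an action $C^* \ctens M^* \to M^*$, and conversely, the two constructions being mutually quasi-inverse by naturality and the double-dual isomorphism. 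The main obstacle is not any single step but the bookkeeping linking $\otimes^!$ and $\ctens$: one must take care that the strong monoidal structure supplied by Theorem~\ref{thm:tensor dual} --- which lives on $(\CF_k, \otimes^!, k)$, a category \emph{not} closed under the restriction of $\otimes^!$ to $\PC_k$ --- transports coherently onto $(\PC_k, \ctens, k)$, which is exactly where the completion-insensitivity of $(-)^\circ$ does the work. The surrounding facts --- that $(\PC_k, \ctens, k)$ is symmetric monoidal~\cite{brumer:pseudocompact}, and that strong monoidal equivalences induce equivalences of monoid, comonoid, module, and comodule categories --- are standard, and I would cite them rather than reprove them.
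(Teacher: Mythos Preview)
Your proposal is correct and follows essentially the same route as the paper: you deduce strong monoidality of $(-)^\circ$ on $(\PC_k,\ctens,k)$ from Theorem~\ref{thm:tensor dual} via the completion-insensitivity isomorphism $(E \ctens F)^\circ \cong (E \otimes^! F)^\circ$, transfer the monoidal structure to the quasi-inverse $(-)^*$, and then read off the two further dualities from the general monoid/comonoid and module/comodule correspondence under a strong monoidal equivalence. The one place where the paper is slightly more careful is the identification $\PCAlg \simeq \mathrm{Mon}(\PC_k,\ctens,k)$, which it isolates as a separate lemma (Lemma~\ref{lem:pseudocompact monoid}) and proves by a duality trick; your appeal to the universal property of $\ctens$ gives the bijection between monoid structures and continuous bilinear multiplications, but you should note that one still has to check that a pseudocompact \emph{vector space} with continuous multiplication admits a neighborhood basis of open two-sided \emph{ideals} --- a short argument, but not literally the universal property.
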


\begin{proof}
As mentioned before, it is well-established~\cite[IV.4]{gabriel} that the continuous and discrete dual functors provide a duality between $\PC_k$ and $\Vect_k$. These functors clearly interchange the monoidal units. Thus we only need to examine the effect of these functors on the tensor structures.

First let $E$ and $F$ be objects of $\PC_k$. Recalling that $E \ctens F$ is the completion of the linearly topologized space $E \otimes^! F$, we obtain a natural isomorphism
\begin{equation}\label{eq:completing}
(E \ctens F)^\circ = \Top_k (E \ctens F, k) \cong \Top_k(E \otimes^! F, k) = (E \otimes^! F)^\circ.
\end{equation}
Because the pseudocompact spaces $E$ and $F$ have cofinite topologies, Theorem~\ref{thm:tensor dual} yields a natural isomorphism
\[
(E \otimes^! F)^\circ \cong E^\circ \otimes F^\circ,
\]
which composes with~\eqref{eq:completing} to provide natural isomorphisms
\[
\Psi_{E,F} \colon E^\circ \otimes F^\circ \xrightarrow{\sim} (E \ctens F)^\circ.
\]
The unitor coherence axioms are as easily verified as before. The associator coherence diagram can be derived from~\eqref{eq:coherence} as follows. For pseudocompact spaces $E$, $F$, and $G$, one verifies that the induced diagrams
\[
\xymatrix{
E^\circ \otimes (F \ctens G)^\circ \ar[d]_{\Psi_{E,F \ctens G}} & E^\circ \otimes (F \otimes^! G)^\circ \ar[l]_-\sim  \ar[d]^{\Phi_{E, F \otimes^! G}} \\
(E \ctens (F \ctens G))^\circ  & (E \otimes^! (F \otimes^! G))^\circ \ar[l]_-\sim
}
\]
and
\[
\xymatrix{
(E \otimes^! F)^\circ \otimes G^\circ \ar[r]^-\sim  \ar[d]_{\Phi_{E \otimes^! F, G}} & (E \ctens F)^\circ \otimes G^\circ \ar[d]^{\Psi_{E \ctens F, G}} \\
 ((E \otimes^! F) \otimes^! G)^\circ \ar[r]^-\sim & ((E \ctens F) \ctens G)^\circ 
}
\]
commute, where the horizontal isomorphisms are induced as in~\eqref{eq:completing}. Then ``pasting'' these to~\eqref{eq:coherence} results in a larger commuting diagram, whose outer paths will be the desired coherence diagram for the isomorphisms of $\Psi$.
Thus the continuous dual $(-)^\circ$ is a strong monoidal functor.

Since the quasi-inverse $(-)^*$ must be adjoint to $(-)^\circ$, we can deduce~\cite[3.3]{schwedeshipley:monoidal} that it is also a strong monoidal functor. It is also possible to verify this directly. While we will not check the coherence axioms here, we will at least demonstrate how to locate the relevant natural isomorphisms.
Fix objects $V$ and $W$ of $\Vect_k$. Let $\{V_i\}$ be an indexing of the finite-dimensional subspaces of $V$, so that $V \cong \dirlim V_i$ in $\Vect_k$. Then we obtain natural isomorphisms
\begin{align*}
(V \otimes W)^* &= \Hom_k(V \otimes W,k) \\
&\cong \Hom_k(V, \Hom(W, k)) \\
&\cong \Hom_k(\dirlim V_i, W^*) \\
&\cong \invlim \Hom_k(V_i, W^*) \\
&\cong \invlim V_i^* \otimes W^* \\
&\cong V^* \ctens W^*,
\end{align*}
for which it would remain to verify coherence.

Finally, we derive the further dualities~(1) and~(2).
Note that $\Coalg$ is the category of comonoid objects in $\Vect_k$, while we verify in Lemma~\ref{lem:pseudocompact monoid} below that $\PCAlg$ is the category of monoid objects in $\PC_k$. Since the strong monoidal duality interchanges comonoids and monoids, we obtain the duality~(1) between coalgebras and pseudocompact algebras. 
Finally, fix a $k$-coalgebra $C$. Then $C \lComod$ is the category of left $C$-comodule objects over the comonoid $C$ in $\Vect$, while $C^* \lPC$ is the category of monoid objects over the monoid $C^*$ in $\PC_k$. So the duality~(2) follows similarly.
\end{proof} 

\begin{lemma}\label{lem:pseudocompact monoid}
Pseudocompact algebras are precisely the monoid objects in the monoidal category $(\PC_k, \ctens, k)$. 
\end{lemma}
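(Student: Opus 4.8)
The plan is to unwind both sides of the claimed equality into concrete linear-algebraic data and match them. A monoid object in $(\PC_k, \ctens, k)$ is an object $A \in \PC_k$ together with morphisms $\mu \colon A \ctens A \to A$ and $\eta \colon k \to A$ in $\PC_k$ satisfying the usual associativity and unit diagrams. Since morphisms in $\PC_k$ are continuous linear maps, and by the universal property of $\ctens$ recalled in Subsection~\ref{sub:monoidal} a continuous linear map $A \ctens A \to A$ corresponds exactly to a continuous bilinear map $A \times A \to A$, the datum $\mu$ is precisely a continuous associative multiplication on the pseudocompact space $A$, and $\eta$ picks out a (necessarily two-sided) unit element $1 \in A$. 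Thus a monoid object in $(\PC_k, \ctens, k)$ is exactly a pseudocompact vector space equipped with a continuous, associative, unital multiplication --- which is the definition of a pseudocompact algebra. Conversely, every pseudocompact algebra gives rise to such data, since its multiplication $A \times A \to A$ is continuous by definition and hence factors through $A \ctens A$.

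The main point requiring care is the translation between the completed tensor product $\ctens$ and bilinear maps, and in particular the verification that the multiplication of a pseudocompact algebra really is continuous \emph{as a map out of $A \ctens A$} rather than merely separately continuous in each variable. For this I would argue directly: if $A$ is pseudocompact with neighborhood basis of zero consisting of open ideals $I$ of finite codimension, then for each such $I$ the multiplication induces a well-defined map $A/I \times A/I \to A/I$, and the preimage of $I$ under $\mu$ contains $I \otimes A + A \otimes I$, which is exactly the condition for $\ker(A \otimes A \to A/I)$ to be open in $A \otimes^! A$. Hence $\mu \colon A \otimes^! A \to A$ is continuous, and since $A$ is complete and separated it extends uniquely to the completion $A \ctens A$. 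This shows the multiplication is a morphism in $\PC_k$ with source $A \ctens A$.

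Finally I would check that the associativity and unit axioms for a monoid object coincide with the ring axioms: the two composites $A \ctens A \ctens A \rightrightarrows A$ appearing in the monoid associativity square, when precomposed with the dense map $A \times A \times A \to A \ctens A \ctens A$, compute $(ab)c$ and $a(bc)$ respectively, so their equality is associativity of the product; similarly the unit triangles encode $1a = a = a1$. (Here one uses that two continuous maps out of $A \ctens A \ctens A$ agreeing on the image of $A \times A \times A$ agree everywhere, by density and the Hausdorff property of $A$.) Morphisms of monoid objects are by definition continuous linear maps commuting with $\mu$ and $\eta$, i.e. continuous unital algebra homomorphisms, so the categories $\PCAlg$ and $\mathrm{Mon}(\PC_k, \ctens, k)$ coincide on the nose. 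I expect the density/Hausdorff bookkeeping in this last step to be the only genuinely delicate part; everything else is a direct unpacking of definitions.
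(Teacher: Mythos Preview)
Your forward direction (pseudocompact algebra $\Rightarrow$ monoid object) matches the paper's: both factor the continuous multiplication through $A \ctens A$ via its universal property.

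For the converse you take a different route from the paper, and there is a gap. You assert that ``a pseudocompact vector space equipped with a continuous, associative, unital multiplication'' is ``the definition of a pseudocompact algebra,'' but in this paper (and in the standard references) a pseudocompact algebra is required to have a neighborhood basis of zero consisting of open two-sided \emph{ideals} of finite codimension, not merely open subspaces. Continuity of $\mu \colon A \ctens A \to A$ does imply this, but it needs an argument you have not given: for an open subspace $U$, continuity of the induced map $A \otimes^! A \to A$ yields an open $V$ with $VA + AV \subseteq U$; repeating for $V$ gives an open $V_1$ with $V_1 A + A V_1 \subseteq V$, and then $A V_1 A \subseteq AV \subseteq U$ is an open two-sided ideal contained in $U$. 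Without this step, the identification of ``monoid in $\PC_k$'' with ``pseudocompact algebra'' is not justified.

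The paper's converse is handled quite differently. Rather than produce open ideals directly, it restricts $\mu$ along $A \otimes^! A \hookrightarrow A \ctens A$ to view $A$ as a monoid in $(\CF_k, \otimes^!, k)$, applies the strong monoidal functor $(-)^\circ$ of Theorem~\ref{thm:tensor dual} to obtain a coalgebra $A^\circ$, and then transports the pseudocompact algebra structure of $(A^\circ)^*$ across the natural isomorphism $A \cong (A^\circ)^*$. Your direct argument is more elementary and self-contained once the missing step above is inserted; the paper's argument instead exploits the duality machinery already established, at the cost of being less transparent about why open ideals exist.
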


\begin{proof}
If $A$ is a pseudocompact $k$-algebra, then its multiplication induces a continuous bilinear map $m \colon A \times A \to A$, which factors through the completed tensor product $\widehat{m} \colon A \ctens A \to A$ thanks to its universal property~\cite{brumer:pseudocompact}. It is then clear that this map and the usual unit map $u \colon k \to A$ form a monoid object $(A, \widehat{m}, u)$ in $(\PC_k, \ctens, k)$. 

Conversely, suppose that $(A, m, u)$ is a monoid object of $(\PC_k, \ctens, k)$. If we view the multiplication as a map 
\[
m^! \colon A \otimes^! A \hookrightarrow A \ctens A \xrightarrow{m} A,
\]
then in fact $(A, m^!, u)$ is a monoid object in $(\CF_k, \otimes^!, k)$. By Theorem~\ref{thm:tensor dual}, the continuous dual functor sends this to a comonoid, whose dual is then a pseudocompact algebra $(A^\circ)^*$. The natural isomorphism of pseudocompact spaces $A \cong (A^\circ)^*$ is then readily verified to be an isomorphism of topological algebras. Thus $A$ is in fact a pseudocompact algebra as desired.
\end{proof}

\subsection{The finite dual as a topological dual}\label{sub:finite dual}

We will now apply Theorem~\ref{thm:tensor dual} to show that the finite dual of an algebra can be viewed as the continuous dual with respect to an appropriate topology.
Let $A$ be a $k$-algebra with multiplication $m \colon A \otimes A \to A$.  Its \emph{finite dual coalgebra} has underlying vector space
\begin{align*}
A^\circ &= \{\phi \in A^* \mid m^*(\phi) \mbox{ lies in the subspace } A^* \otimes A^* \subseteq (A \otimes A)^*\} \\
&= \{\phi \in A^* \mid \ker \phi \mbox{ contains an ideal of finite codimension in } A\}.
\end{align*}
It happens that the restriction of $m^* \colon A^* \to (A \otimes A)^*$ to $A^\circ \subseteq A^*$ has image in $A^\circ \otimes A^\circ \subseteq A^* \otimes A^* \subseteq (A \otimes A)^*$. In this way $m^*$ restricts to a comultiplication 
\[
\Delta \colon A^\circ \to A^\circ \otimes A^\circ.
\]
This makes the finite dual into a coalgebra whose counit is dual to the unit of $A$. This can be alternatively described as a direct limit. If we set
\begin{equation}\label{eq:F(A)}
\F(A) = \{\mbox{ideals } I \unlhd A \mid \dim_k(A/I) < \infty\},
\end{equation}
then the finite dual coalgebra can alternatively be described as
\begin{equation}\label{eq:colimit}
A^\circ \cong \dirlim_{I \in \F(A)} (A/I)^*.
\end{equation}

If $A$ is a $k$-algebra, we define the \emph{cofinite} topology on $A$ as the linear topology whose open ideals are exactly the ideals of finite codimension---those ideals in the family $\F(A)$ defined in~\eqref{eq:F(A)}. 
A subspace of $A$ is open for this topology if and only if it contains an ideal of finite codimension.
This allows us to verify that the finite dual of $A$ and the continuous dual of $A$ are equal as subspaces of $A^*$:
\begin{align*}
\Top_k(A,k) &= \{\phi \in A^* \mid \ker \phi \supseteq E \mbox{ for some } E \in \setS(A)\} \\
&= \{\phi \in A^* \mid \ker \phi \supseteq I \mbox{ for some } I \in \F(A)\} \\
&= A^\circ.
\end{align*}
This is our justification for the notation $(-)^\circ = \Top_k(-,k)$.

\begin{theorem}\label{thm:continuous equals sweedler}
Let $A$ be a $k$-algebra, viewed as a linearly topologized algebra with its cofinite topology. Then its multiplication forms a continuous map between the topological vector spaces 
\[
m \colon A \otimes^! A \to A.
\]
Furthermore, the continuous dual functor $(-)^\circ = \Top_k(-,k)$ applied to the multiplication
yields the finite dual coalgebra $A^\circ$ with its comultiplication
\[
m^\circ \colon A^\circ \to (A \otimes^! A)^\circ \xrightarrow{\sim} A^\circ \otimes A^\circ.
\]
\end{theorem}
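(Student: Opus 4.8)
The plan is to prove the two assertions in order: first continuity of the multiplication, then the identification of its continuous dual with the finite-dual comultiplication.

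For continuity of $m \colon A \otimes^! A \to A$: recall that the basic open subspaces of $A$ (with its cofinite topology) are exactly the open ideals, i.e.\ the ideals $I \in \F(A)$, and these even form a neighborhood basis of zero since any open subspace contains such an $I$. So to check continuity it suffices to show that for each $I \in \F(A)$, the preimage $m^{-1}(I)$ is open in $A \otimes^! A$; equivalently, that $m^{-1}(I)$ contains a basic open subspace $A_0 \otimes A + A \otimes A_0$ for some $A_0 \in \setS(A)$. Here I would simply take $A_0 = I$ itself: since $I$ is a two-sided ideal, $m(I \otimes A) = I \cdot A \subseteq I$ and $m(A \otimes I) = A \cdot I \subseteq I$, hence $I \otimes A + A \otimes I \subseteq m^{-1}(I)$. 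This is the crucial point where we use that the open subspaces of $A$ are \emph{ideals} rather than arbitrary finite-codimension subspaces. Thus $m$ is continuous.

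For the second assertion: since $m$ is continuous, applying the functor $(-)^\circ = \Top_k(-,k)$ gives a linear map $m^\circ \colon A^\circ \to (A \otimes^! A)^\circ$. Because $A$ has its cofinite topology, so does $A \otimes^! A$ (as noted in Subsection~\ref{sub:monoidal}), so both spaces lie in $\CF_k$, and Theorem~\ref{thm:tensor dual} supplies the natural isomorphism $\Phi_{A,A}^{-1} \colon (A \otimes^! A)^\circ \xrightarrow{\sim} A^\circ \otimes A^\circ$. It remains to identify the composite $\Phi_{A,A}^{-1} \circ m^\circ$ with the comultiplication $\Delta$ of the finite dual. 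The key observation is that $m^\circ$ is literally the restriction of the linear dual $m^* \colon A^* \to (A\otimes A)^*$ to the subspace $A^\circ \subseteq A^*$: a continuous functional $\phi \in A^\circ$ is sent by the continuous-dual functor to $\phi \circ m$, which is exactly $m^*(\phi)$ viewed as a functional on $A\otimes A$, and one checks it indeed lands in $(A\otimes^! A)^\circ$. Likewise the embedding $\Phi_{A,A}$ from the proof of Theorem~\ref{thm:tensor dual} is precisely the restriction of the standard embedding $A^*\otimes A^* \hookrightarrow (A\otimes A)^*$. Now compare with the definition of $\Delta$ in Subsection~\ref{sub:finite dual}: $\Delta$ is defined as the corestriction of $m^*|_{A^\circ}$ along the same embedding $A^\circ \otimes A^\circ \hookrightarrow (A\otimes A)^*$. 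Since both $\Phi_{A,A}^{-1}\circ m^\circ$ and $\Delta$ are characterized by making the square with $m^*|_{A^\circ}$ and $\Phi_{A,A}$ commute, and $\Phi_{A,A}$ is injective, the two coincide.

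The main (and essentially only) obstacle is bookkeeping: one must track three copies of the ``natural embedding of dual tensor into tensor dual'' — the one used to define $A^\circ$'s comultiplication, the one ($\Phi_{A,A}$) from Theorem~\ref{thm:tensor dual}, and the one hidden in identifying $m^\circ$ with $m^*|_{A^\circ}$ — and verify they are all the same map. No genuinely new computation is needed beyond unwinding these identifications and invoking injectivity of the embedding to conclude the two linear maps $A^\circ \to A^\circ\otimes A^\circ$ agree. The verification that $A^\circ$ with this comultiplication is a coalgebra is immediate from Theorem~\ref{thm:tensor dual} (it is the image of a comonoid under a strong monoidal functor, or one simply cites the standard fact), so no separate argument is required.
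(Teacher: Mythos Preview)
Your proposal is correct and follows essentially the same approach as the paper: the continuity argument is identical (use that $I$ is a two-sided ideal to get $m(I\otimes A + A\otimes I)\subseteq I$), and the identification of $m^\circ$ with the finite-dual comultiplication is done in both cases by observing that $m^\circ(\phi)=\phi\circ m$ lands in $(A\otimes^! A)^\circ\cong A^\circ\otimes A^\circ\subseteq A^*\otimes A^*$ via the same embedding used to define $\Delta$. Your bookkeeping about the three embeddings is a bit more explicit than the paper's, but the content is the same.
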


\begin{proof}
First we verify that multiplication yields a continuous map $m \colon A \otimes^! A \to A$. If $I \in \F(A)$ is an ideal of finite codimension, then $I \otimes A + A \otimes I$ is an open subspace of $A \otimes^! A$ such that $m(I \otimes A + A \otimes I) = IA + AI \subseteq I$. So $m$ is indeed continuous. 

It follows that $A$ with its unit $\eta \colon k \to A$ can be viewed as a monoid object $(A, m, \eta)$ in the monoidal category $(\CF_k, \otimes^!, k)$, or equivalently, as a comonoid object in the opposite tensor category. Because strong monoidal functors send comonids to comonoids, it follows from Theorem~\ref{thm:tensor dual} that $(A^\circ, m^\circ, \eta^\circ)$ is a coalgebra.

As explained above, the subspace $A^\circ \subseteq A^*$ is both the underlying vector space of the finite dual coalgebra and the continuous dual space. Let $\phi \in A^\circ$. The counit $\epsilon = \eta^\circ$ is easily seen to be given by $\epsilon(\phi) = \phi(1)$. Finally, the comultiplication $\Delta = m^\circ$ given by
\[
\Delta(\phi) = \phi \circ m \in (A \otimes^! A)^\circ \cong A^\circ \otimes A^\circ \subseteq A^* \otimes A^*
\]
agrees with the comultiplication of the finite dual (see~\cite[Section~6]{sweedler:hopf} or~\cite[Section~2.5]{radford:hopf}). This completes the proof.
\end{proof}

Note that every algebra homomorphism $f \colon A \to B$ is continuous with respect to the cofinite topologies on $A$ and $B$. Thus we see that the functoriality of finite dual $(-)^\circ \colon \Alg\op \to \Coalg$ can be explained in light of the duality of Theorem~\ref{thm:tensor dual} quite simply: the continuous dual functor sends monoid objects in $(\CF_k, \otimes^!, k)$ to comonoid objects in $(\Vect_k, \otimes, k)$.

\section{Dual coalgebras of twisted tensor products}\label{sec:twisted tensor}

Our goal in this section is to provide a method by which the finite dual can in principle be computed for a large class of algebras: those arising from a twisted tensor product construction. This makes the problem of computing dual coalgebras approachable for algebras that do not necessarily arise in connection with a Hopf algebra. 
Even so, the method has some novel applications in the context of bitwisted tensor products of Hopf algebras, as shown in Section~\ref{sec:applications}.

\subsection{Review of twisted and cotwisted tensor products}
\label{sub:crossed product}

A twisted tensor product of two $k$-algebras $A$ and $B$, as introduced in~\cite{csv:twisted}, is (informally) an algebra structure on the vector space $A \otimes B$ that is allowed to deform the usual tensor product algebra structure while retaining the respective algebra structures on $A$ and $B$. Similarly, ``smash coproducts and biproducts'' were introduced for coalgebras and bialgebras in~\cite{bd:cross, cimz:smash}. We recall the definitions and basic notions associated with these constructions below. 
(While we have slightly adjusted our terminology from some other sources in the literature, we have done so to maintain internal consistency of our language. We attempt to clarify these differences of terminology below.)

In the symmetric monoidal category $(\Vect_k, \otimes, k)$ of $k$-vector spaces, we will denote the ``tensor swap'' braiding for objects $V,W \in \Vect_k$ by
\begin{align*}
\sigma_{V,W} \colon V \otimes W &\to W \otimes V, \\
v \otimes w &\mapsto w \otimes v.
\end{align*}
Given a $k$-algebra $A$, we let $(A, m_A, \eta_A)$ denote its structure as a monoid object in $(\Vect_k, \otimes, k)$. Thus its multiplication is considered as a linear map $m_A \colon A \otimes A \to A$, and its unit map $\eta_A \colon k \to A$ is given by $\eta_A(1_k) = 1_A$.

Let $A$ and $B$ be algebras. Suppose that 
\[
\tau \colon B \otimes A \to A \otimes B
\]
is a linear map. Define a multiplication $m_\tau \colon (A \otimes B) \otimes (A \otimes B) \to A \otimes B$ by
\begin{equation}\label{eq:m rho}
m_\tau = (m_A \otimes m_B) \circ (\id_A \otimes \tau \otimes \id_B).
\end{equation}
If $m_\tau$ is associative with identity $1_A \otimes 1_B$, then the resulting algebra
\begin{equation}\label{eq:twisted tensor}
(A \otimes_\tau B, m_{A \otimes_\tau B}, \eta_{A \otimes_\tau B}) = (A \otimes B, m_\tau, \eta_A \otimes \eta_B)
\end{equation}
is a \emph{twisted tensor product} of $A$ and $B$, and $\tau$ is called a \emph{twisting map} for $A$ and $B$. In this case, the ``inclusion'' maps
\begin{align}\label{eq:inclusions}
\begin{split}
i_A &= \id_A \otimes 1_B \colon A \to A \otimes_\tau B \mbox{ and} \\
i_B &= 1_A \otimes \id_B  \colon B \to A \otimes_\tau B
\end{split}
\end{align} 
are algebra homomorphisms. 
The twisted tensor product can be characterized in terms of a universal property~\cite[Proposition~2.7]{csv:twisted} via $\tau$ and the inclusion maps above, but we will not recall that here since we do not make use of it.

Notice that the prototypical case where $\tau$ is chosen to be the tensor swap $\sigma := \sigma_{B,A}$ results in the usual tensor product algebra $A \otimes_\sigma B = A \otimes B$. In this way, we may view twisted tensor products as deformations of the tensor algebra.
On the other hand, if one takes an algebra $A$ with an automorphism $\sigma$ and a $\sigma$-derivation $\delta$, then setting $B = k[t]$ and choosing $\tau \colon k[t] \otimes A \to A \otimes k[t]$ appropriately, we can recover the Ore extension $A[t; \sigma, \delta] \cong A \otimes_\tau k[t]$ as in~\cite[Examples~2.11]{cimz:smash}.

A fundamental problem addressed in~\cites{csv:twisted, cimz:smash} is to isolate properties of a linear map $\tau \colon B \otimes A \to A \otimes B$ that make it a twisting map. The map $\tau$ is defined to be \emph{normal} if it satisfies the conditions
\begin{align}\label{eq:normal}
\begin{split}
\tau \circ (\eta_B \otimes \id_A) &= \id_A \otimes \eta_B, \\
\tau \circ (\id_B \otimes \eta_A) &= \eta_A \otimes \id_B.
\end{split}
\end{align}
(This is equivalent to saying that the restriction of $\tau$ to the subspaces $A \otimes 1_B$ and $1_A \otimes B$ agrees with the tensor swap $\sigma = \sigma_{B,A}$.)
Furthermore, $\tau$ is defined to be \emph{multiplicative} if it satisfies the conditions
\begin{align}\label{eq:multiplicative}
\begin{split}
\tau \circ (\id_B \otimes m_A) &= (m_A \otimes \id_B) \circ (\id_A \otimes \tau) \circ (\tau \otimes \id_A), \\
\tau \circ (m_B \otimes \id_A) &= (\id_A \otimes m_B) \circ (\tau \otimes \id_B) \circ (\id_B \otimes \tau).
\end{split}
\end{align}
These turn out to be necessary and sufficient conditions for $\tau$ to define a twisted tensor product.

\begin{proposition}\label{prop:normal multiplicative}
Let $A$ and $B$ be algebras. A linear map 
\[
\tau \colon B \otimes A \to A \otimes B
\] 
is a twisting map if and only if it is normal and multiplicative.
\end{proposition}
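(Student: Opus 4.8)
The plan is to verify directly that $m_\tau$ from~\eqref{eq:m rho} is associative with unit $1_A \otimes 1_B$ if and only if $\tau$ is normal and multiplicative. This is a standard ``diagrammatic'' computation, and the cleanest organization is to treat the unit axioms and the associativity axiom separately, since each decomposes naturally.

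First I would handle the unit conditions. Writing out $m_\tau \circ (\eta_{A\otimes B} \otimes \id_{A \otimes B})$ and $m_\tau \circ (\id_{A \otimes B} \otimes \eta_{A \otimes B})$ using the definition~\eqref{eq:m rho} and the unit axioms for $m_A$ and $m_B$, one sees that the left unit law reduces precisely to $\tau \circ (\eta_B \otimes \id_A) = \id_A \otimes \eta_B$ and the right unit law to $\tau \circ (\id_B \otimes \eta_A) = \eta_A \otimes \id_B$; that is, the unit axioms for $m_\tau$ are \emph{equivalent} to normality~\eqref{eq:normal}. This step is essentially bookkeeping with the interchange of tensor factors and poses no real difficulty.

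The substantive step is associativity. Assuming $\tau$ is normal (so in particular we may restrict attention to the ``interior'' of the product), I would expand both $m_\tau \circ (m_\tau \otimes \id_{A \otimes B})$ and $m_\tau \circ (\id_{A \otimes B} \otimes m_\tau)$ as composites of $m_A$, $m_B$, $\tau$, and identity maps on $A^{\otimes 3} \otimes B^{\otimes 3}$, using the associativity of $m_A$ and $m_B$ to put each side into a normal form. Comparing the two normal forms, one finds that the discrepancy is governed by how $\tau$ interacts with $m_A$ on one side and with $m_B$ on the other; the two multiplicativity conditions~\eqref{eq:multiplicative} are exactly what is needed to reconcile them, giving that multiplicativity (given normality) implies associativity. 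For the converse, one applies the associativity identity for $m_\tau$ to carefully chosen elements of the form $(a \otimes 1) \otimes (1 \otimes b) \otimes (a' \otimes 1)$ and $(1 \otimes b) \otimes (a \otimes 1) \otimes (1 \otimes b')$, using normality to strip off the trivial tensor factors, and reads off the two equations in~\eqref{eq:multiplicative}.

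The main obstacle is purely organizational: keeping track of the six tensor slots and the order in which $\tau$ must be applied when expanding the triple product, so that the two sides of the associativity equation are brought to a common form where the multiplicativity relations can be substituted. I would mitigate this by noting that it suffices to evaluate on pure tensors $a \otimes b$, $a' \otimes b'$, $a'' \otimes b''$ and to use Sweedler-type notation $\tau(b \otimes a) = \sum a_\tau \otimes b^\tau$ for the twisting map, which makes the bookkeeping manageable. Since this proposition is well known (it is~\cite[Proposition~2.7]{csv:twisted}), I would most likely present the computation compactly or simply cite it, recording here only the key identities and the element-chasing argument for the converse.
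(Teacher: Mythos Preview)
The paper does not give a proof at all: it simply cites~\cite[Proposition/Definition~2.3, Remark~2.4]{csv:twisted} and~\cite[Theorem~2.5]{cimz:smash}. Your sketch of the direct argument (unit axioms $\Leftrightarrow$ normality, associativity reduced to the two multiplicativity relations, with the converse obtained by evaluating on degenerate triples) is correct and is essentially the computation carried out in those references, so either presenting it or citing it is fine. One small correction: you cite~\cite[Proposition~2.7]{csv:twisted}, but in that paper Proposition~2.7 is the universal property of the twisted tensor product; the characterization you want is Proposition/Definition~2.3 together with Remark~2.4.
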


\begin{proof}
See~\cite[Proposition/Definition~2.3, Remark~2.4]{csv:twisted} and~\cite[Theorem~2.5]{cimz:smash}.
\end{proof}

Dually, let $C$ and $D$ be coalgebras with a linear map
\[
\phi \colon C \otimes D \to D \otimes C.
\]
Define a comultiplication $\Delta_\phi \colon (C \otimes D) \otimes (C \otimes D) \to C \otimes D$ by
\[
\Delta_\phi =(\id_C \otimes \, \phi \otimes \id_D) \circ (\Delta_C \otimes \Delta_D).
\]
If $\Delta_\phi$ is coassociative with counit $\eps_C \otimes \eps_D$, then the resulting coalgebra
\begin{equation}\label{eq:cross coalgebra}
(C \otimes^\phi D, \Delta_{C \otimes^\phi D}, \eps_{C \otimes^\phi D}) = (C \otimes D, \Delta_\phi, \eps_C \otimes \eps_D)
\end{equation}
will be called a \emph{cotwisted tensor product} of $A$ and $B$, and $\phi$ is a \emph{cotwisting map}. (This terminology differs from that of~\cite{cimz:smash, bct:cross}, where this construction was respectively called the \emph{smash coproduct} or the \emph{cross product coalgebra}. We have elected to use this terminology in order to minimize confusion with similarly named constructions in the literature.)

As one would expect, there is a characterization of cotwisting maps dual to that of Proposition~\ref{prop:normal multiplicative}. A linear map $\phi \colon C \otimes D \to D \otimes C$ is defined to be \emph{conormal} if it satisfies the equations
\begin{align}\label{eq:conormal}
\begin{split}
(\eps_D \otimes \id_C) \circ \phi &= \id_C \otimes \eps_D, \\
(\id_D \otimes \eps_C) \circ \phi &= \eps_C \otimes \id_D,
\end{split}
\end{align}
and $\phi$ is defined to be \emph{comultiplicative} if it satisfies the conditions
\begin{align}\label{eq:comultiplicative}
\begin{split}
(\id_D \otimes \Delta_C) \circ \phi &= (\phi \otimes \id_C) \circ (\id_C \otimes \, \phi) \circ (\Delta_C \otimes \id_D), \\
(\Delta_D \otimes \id_C) \circ \phi &= (\id_D \otimes \, \phi) \circ (\phi \otimes \id_D) \circ (\id_C \otimes \Delta_D).
\end{split}
\end{align}
These provide the following characterization of cotwisting maps.

\begin{proposition}[\cite{cimz:smash}*{Theorem~3.4}] \label{prop:conormal comultiplicative}
Let $C$ and $D$ be coalgebras. A linear map
\[
\phi \colon D \otimes C \to C \otimes D
\]
is a cotwisting map if and only if it is conormal and comultipliative.
\end{proposition}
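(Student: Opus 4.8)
The plan is to obtain this proposition as the formal dual of Proposition~\ref{prop:normal multiplicative}, by transporting that result into the opposite monoidal category. The category $(\Vect_k\op, \otimes, k)$ is again symmetric monoidal, with braiding the inverse of the tensor swap $\sigma$; since $\sigma$ is an involution, this braiding is again a tensor swap. Under this passage a $k$-coalgebra $(C, \Delta_C, \eps_C)$ becomes precisely a monoid object of $\Vect_k\op$, with ``multiplication'' $\Delta_C$ and ``unit'' $\eps_C$; a linear map $\phi$ interchanging the tensor factors of $C$ and $D$ becomes a candidate twisting map there; and the defining formula $\Delta_\phi = (\id_C \otimes \, \phi \otimes \id_D) \circ (\Delta_C \otimes \Delta_D)$ is read in $\Vect_k\op$ as exactly the formula~\eqref{eq:m rho} defining $m_\tau$. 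Coassociativity of $\Delta_\phi$ with counit $\eps_C \otimes \eps_D$ is associativity of the corresponding multiplication with its unit, so ``$\phi$ is a cotwisting map'' is literally ``$\phi$ is a twisting map in $\Vect_k\op$''. In the same way the conormality equations~\eqref{eq:conormal} are the normality equations~\eqref{eq:normal} read in $\Vect_k\op$, and the comultiplicativity equations~\eqref{eq:comultiplicative} are the multiplicativity equations~\eqref{eq:multiplicative} read there. Thus Proposition~\ref{prop:normal multiplicative}, applied in $(\Vect_k\op, \otimes, k)$, gives the result; alternatively one may simply cite~\cite{cimz:smash}*{Theorem~3.4}.

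The one step that requires care is checking that this dictionary is exact: one must confirm that the contravariant operation reversing every arrow and every composite carries each string-diagram identity behind normality and multiplicativity to the corresponding identity behind conormality and comultiplicativity, using that $\otimes$ is strictly functorial and that the braiding is self-inverse. This is routine diagrammatic bookkeeping rather than a genuine obstacle, but it is the only place where one could slip, so I would either write it out with string diagrams or defer to~\cite{cimz:smash}.

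If a self-contained argument is wanted instead, I would mimic the proof of Proposition~\ref{prop:normal multiplicative} directly: expand both iterated comultiplications $(\Delta_\phi \otimes \id_{C \otimes D}) \circ \Delta_\phi$ and $(\id_{C \otimes D} \otimes \Delta_\phi) \circ \Delta_\phi$ using the defining formula together with coassociativity of $\Delta_C$ and $\Delta_D$, and show that their equality is equivalent to the two comultiplicativity identities~\eqref{eq:comultiplicative}; separately, check that the counit condition on $\Delta_\phi$ is equivalent to conormality~\eqref{eq:conormal}, obtaining conormality by applying $\eps_C$ or $\eps_D$ to the appropriate tensor slot. The main difficulty here is purely combinatorial---tracking the six tensor factors that appear in the coassociativity constraint for $C \otimes^\phi D$---and is best handled with string diagrams rather than Sweedler-notation calculations.
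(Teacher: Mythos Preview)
The paper gives no proof of this proposition: it is stated with attribution to \cite{cimz:smash}*{Theorem~3.4} and nothing more. Your proposal therefore already subsumes the paper's ``proof'' (you explicitly offer the citation as one option), and your additional duality argument is correct in spirit and is the standard way to see why the two propositions are equivalent. The only caveat is that Proposition~\ref{prop:normal multiplicative} as stated in the paper is specifically about $k$-algebras, not monoids in an arbitrary monoidal category, so to literally ``apply it in $\Vect_k\op$'' you are implicitly relying on the fact that its cited proof in \cite{csv:twisted, cimz:smash} is purely diagrammatic and works in any monoidal category; you acknowledge this, but it is worth saying plainly rather than folding it into ``routine bookkeeping.'' Either the duality route or your direct expansion would be a genuine improvement over the paper's bare citation.
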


Finally, suppose that $A$ and $B$ each have the structure of both an algebra and a coalgebra. Let $\tau \colon B \otimes A \to A \otimes B$ be a twisting map of the underlying algebras and let $\phi \colon A \otimes B \to B \otimes A$ be a cotwisting map of the underlying coalgebras. If the algebra $A \otimes_\tau B$ and coalgebra $A \otimes^\phi B$ structures together make $A \otimes B$ into a bialgebra, then we call this the \emph{bitwisted tensor product} bialgebra, denoted $A \otimes^\phi_\tau B$.
(Again, this construction has been given different names in the literature. It was defined as the \emph{smash biproduct} in~\cite{cimz:smash} and the \emph{crossed product bialgebra} in~\cite{bd:cross, bct:cross}. As before, we have chosen our terminology to avoid conflict with the often used ``cross product'' and ``smash product'' terminology.)

\subsection{Duality for twisted tensor products}

The formal duality between twisted tensor products and cotwisted tensor products suggests the naive idea that the finite dual of a twisted tensor product algebra might be a cotwisted tensor product coalgebra of the respective finite duals. 
Unfortunately, there are examples for which this fails; see Example~\ref{ex:not continuous} below for a specific instance. In order deduce such a result, we need appropriate assumptions in place on the twisting map. It turns out that a pertinent tool is the topological tensor product $- \otimes^! -$ discussed in Subsection~\ref{sub:monoidal} and its good behavior under the continuous dual functor $(-)^\circ \colon \CF_k \to \Top_k$. 
Recall that the cofinite topology on an algebra, defined before Theorem~\ref{thm:continuous equals sweedler}, is the linear topology whose open ideals are the ideals of finite codimension.

First we note that the topological tensor product has the following weak compatibility with twisted tensor products.

\begin{lemma}\label{lem:compatible topologies}
For algebras $A$ and $B$ and a twisting map $\tau \colon B \otimes A \to A \otimes B$, the linear isomorphism (given by the identity on underlying vector spaces)
\[
A \otimes^! B \to A \otimes_\tau B
\]
is continuous, where $A$, $B$, and $A \otimes_\tau B$ are endowed with their cofinite topologies.
\end{lemma}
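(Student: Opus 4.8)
The statement asserts that the identity map on the underlying vector space $A \otimes B$ is continuous when the source carries the $\otimes^!$-topology built from the cofinite topologies on $A$ and $B$, and the target carries the cofinite topology of the algebra $A \otimes_\tau B$. By definition of continuity for a linear map between linearly topologized spaces, it suffices to show that for every open ideal $J$ of finite codimension in $A \otimes_\tau B$, its preimage — which as a set is just $J$ itself — is open in $A \otimes^! B$. So the whole task reduces to: given $J \in \F(A \otimes_\tau B)$, produce open subspaces $I \in \F(A)$ and $K \in \F(B)$ with $I \otimes B + A \otimes K \subseteq J$, since these sums are precisely the basic open subspaces of $A \otimes^! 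B$.

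The natural candidates come from pulling $J$ back along the algebra inclusions $i_A = \id_A \otimes 1_B$ and $i_B = 1_A \otimes \id_B$ of~\eqref{eq:inclusions}. Set $I = i_A^{-1}(J)$ and $K = i_B^{-1}(J)$. First I would check these lie in $\F(A)$ and $\F(B)$: since $i_A$ and $i_B$ are algebra homomorphisms, $I$ and $K$ are ideals; and since $J$ has finite codimension in $A \otimes B$, the induced maps $A/I \hookrightarrow (A \otimes B)/J$ and $B/K \hookrightarrow (A\otimes B)/J$ are injective, so $I$ and $K$ have finite codimension. Next I would verify the containment $I \otimes B + A \otimes K \subseteq J$. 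For the first summand: an element of $I \otimes B$ is a sum of terms $a \otimes b$ with $a \in I$, and in $A \otimes_\tau B$ we have $a \otimes b = (a \otimes 1_B)(1_A \otimes b) = i_A(a)\, i_B(b)$ using normality of $\tau$ (which makes $i_A, i_B$ multiplicative and makes this product computation valid); since $i_A(a) \in J$ and $J$ is an ideal, $a \otimes b \in J$. The argument for $A \otimes K \subseteq J$ is symmetric, writing $a \otimes b = i_A(a)\, i_B(b)$ with $b \in K$ so that the right factor lies in $J$.

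The one point that requires a moment of care — and I expect it to be the only real subtlety — is justifying that $a \otimes b$ equals the product $i_A(a) i_B(b)$ in $A \otimes_\tau B$. Unwinding~\eqref{eq:m rho}, the product of $a \otimes 1_B$ and $1_A \otimes b$ is $(m_A \otimes m_B)(\id_A \otimes \tau \otimes \id_B)(a \otimes 1_B \otimes 1_A \otimes b) = (m_A \otimes m_B)(a \otimes \tau(1_B \otimes 1_A) \otimes b)$, and normality~\eqref{eq:normal} gives $\tau(1_B \otimes 1_A) = 1_A \otimes 1_B$, so the product is $a \otimes b$ as claimed. (Note that in the opposite order, $(1_A \otimes b)(a \otimes 1_B)$ would involve $\tau(b \otimes a)$ and is not generally a pure tensor — so the order matters, but I only need one order.) Once this identity is in hand, the containment is immediate and the lemma follows. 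It is worth remarking that this argument uses only that $\tau$ is normal, not the full strength of being a twisting map, though normality is of course part of the hypothesis.
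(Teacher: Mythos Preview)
Your proof is correct and follows essentially the same approach as the paper: both pull back the given cofinite ideal along the inclusions $i_A$ and $i_B$ to obtain $I \in \F(A)$ and $K \in \F(B)$, and then verify $I \otimes B + A \otimes K \subseteq J$ using that $a \otimes b = i_A(a)\, i_B(b)$ in $A \otimes_\tau B$. The paper phrases the last containment via the induced surjection $A/I \otimes B/K \twoheadrightarrow (A\otimes_\tau B)/J$, while you compute it directly from normality and the ideal property of $J$, but this is only a cosmetic difference.
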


\begin{proof}
Denote $S = A \otimes_\tau B$. Fix $K \in \F(S)$ and denote the canonical projection by $\pi \colon S \twoheadrightarrow S/K =: F$, where the codomain is finite-dimensional. Composing with the inclusion $i_A \colon A \to S$ of~\eqref{eq:inclusions} gives a homomorphism $A \to S \to F$ that factors through a finite-dimensional algebra $A \twoheadrightarrow A_1 \subseteq F$ with kernel $I \in \F(A)$. Similarly, the composition of $i_B$ with $\pi$ must factor through a finite-dimensional algebra $B \twoheadrightarrow B_1 \hookrightarrow F$ with kernel $J \in \F(B)$. Because there is a vector space isomorphism $S \cong i_A(A) \otimes i_B(B)$, it follows that multiplication in $F$ yields a linear surjection $A_1 \otimes B_1 \twoheadrightarrow F$. It follows that
\[
I \otimes B + A \otimes J \subseteq K,
\]
which implies that $K$ is open in $A \otimes^! B$ and proves that the linear isomorphism is continuous. 
\end{proof}

Given a twisting map $\tau \colon B \otimes A \to A \otimes B$, we will wish to apply the continuous dual functor to obtain a cotwisting map. But this will only be possible in the case where $\tau$ is continuous. This can be equivalently characterized in terms of a stronger compatibility between the topological tensor and twisted tensor products as follows.

\begin{proposition}
\label{prop:topological conditions}
Let $A$ and $B$ be algebras with $\tau \colon B \otimes A \to A \otimes B$ a twisting map. Consider the following conditions, where we equip $A$, $B$, and $A \otimes_\tau B$ with their cofinite topologies where appropriate:
\begin{enumerate}[label=\textnormal{(\roman*)}]
\item $\tau \colon B \otimes^! A \to A \otimes^! B$ is continuous.
\item The map $A \otimes^! B \to A \otimes_\tau B$ of Lemma~\ref{lem:compatible topologies} is a homeomorphism.
\item There exist neighborhood bases of zero $\{I_\alpha\} \subseteq \F(A)$ and $\{J_\beta\} \subseteq \F(B)$ for which $\tau(B \otimes I_\alpha) \subseteq I_\alpha \otimes B$ and $\tau(J_\beta \otimes A) \subseteq A \otimes J_\beta$.
\end{enumerate}
Then $\textnormal{(i)} \iff \textnormal{(ii)} \impliedby \textnormal{(iii)}$.
\end{proposition}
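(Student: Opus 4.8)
The plan is to establish the three implications $\textnormal{(iii)} \Rightarrow \textnormal{(i)}$, $\textnormal{(i)} \Rightarrow \textnormal{(ii)}$, and $\textnormal{(ii)} \Rightarrow \textnormal{(i)}$ in turn, unwinding the definitions of the topologies involved. Recall that a basic open subspace of $A \otimes^! B$ is one containing $I \otimes B + A \otimes J$ for some $I \in \F(A)$ and $J \in \F(B)$, and that by Lemma~\ref{lem:compatible topologies} the identity map $A \otimes^! B \to A \otimes_\tau B$ is always continuous; so $\textnormal{(ii)}$ is equivalent to the assertion that every $K \in \F(A \otimes_\tau B)$ contains some $I \otimes B + A \otimes J$.

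For $\textnormal{(iii)} \Rightarrow \textnormal{(i)}$: given a basic open subspace $W \supseteq I \otimes B + A \otimes J$ of $A \otimes^! B$, by $\textnormal{(iii)}$ I may shrink $I$ and $J$ within the given neighborhood bases so that $\tau(B \otimes I) \subseteq I \otimes B$ and $\tau(J \otimes A) \subseteq A \otimes J$ (intersecting two members of a neighborhood basis of zero by ideals produces another such ideal, up to passing to a smaller basis element). Then $\tau(I \otimes B + A \otimes J)$, computed on the domain $B \otimes^! A$ whose basic open is $J \otimes A + B \otimes I$, lands inside $I \otimes B + A \otimes J \subseteq W$; hence the preimage of $W$ under $\tau$ is open, proving continuity.

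For $\textnormal{(i)} \Rightarrow \textnormal{(ii)}$: by the remark above it suffices to show every $K \in \F(A \otimes_\tau B)$ contains a set of the form $I \otimes B + A \otimes J$. By Lemma~\ref{lem:compatible topologies} there already exist $I_0 \in \F(A)$, $J_0 \in \F(B)$ with $I_0 \otimes B + A \otimes J_0 \subseteq K$. Now view $m_\tau = (m_A \otimes m_B) \circ (\id_A \otimes \tau \otimes \id_B)$; the issue is that the middle factor $\tau$ acts on the ``inner'' copy of $B \otimes A$, and to control it I use hypothesis $\textnormal{(i)}$ to find $I_1 \in \F(A)$, $J_1 \in \F(B)$ with $\tau(J_1 \otimes A) \subseteq A \otimes J_0$ and $\tau(B \otimes I_1) \subseteq I_0 \otimes B$. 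Setting $I = I_0 \cap I_1$ and $J = J_0 \cap J_1$, a direct check that the product (in $A\otimes_\tau B$) of $I \otimes B + A \otimes J$ with all of $A\otimes_\tau B$, on either side, lands in $I_0 \otimes B + A \otimes J_0 \subseteq K$, together with the fact that $I \otimes B + A \otimes J$ is itself already contained in $K$, shows $I \otimes B + A \otimes J$ is a (two-sided) ideal of $A \otimes_\tau B$ contained in $K$ — hence $K$ is open in $A \otimes^! B$. For $\textnormal{(ii)} \Rightarrow \textnormal{(i)}$: if the identity $A \otimes^! B \to A \otimes_\tau B$ is a homeomorphism, then $\tau \colon B \otimes^! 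A \to A \otimes^! B$ factors as $B \otimes^! A \cong B \otimes_\tau A \xrightarrow{\tau} A \otimes_\tau B \cong A \otimes^! B$, where the outer maps are homeomorphisms and the middle map is continuous because $\tau$ restricts to an algebra map on each side and is in fact built into the multiplication — more precisely, $\tau$ agrees with the composite of inclusions and the multiplication $m_{A\otimes_\tau B}$ via $B \otimes_\tau A \xrightarrow{i_B \otimes i_A} (A\otimes_\tau B)^{\otimes 2} \to A \otimes_\tau B$, which is continuous for the cofinite topologies since it is a composite of algebra homomorphisms and multiplication, the latter continuous by Theorem~\ref{thm:continuous equals sweedler}; so $\tau$ is continuous.

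I expect the main obstacle to be the bookkeeping in $\textnormal{(i)} \Rightarrow \textnormal{(ii)}$: one must verify carefully that the candidate subspace $I \otimes B + A \otimes J$ is actually a two-sided \emph{ideal} of $A \otimes_\tau B$ (not merely a subspace closed under the relevant multiplications in one slot), which requires tracking how $m_\tau$ interleaves the four tensor factors and using both halves of hypothesis $\textnormal{(i)}$ — the left-module and right-module conditions — simultaneously. The other implications are comparatively routine once the definitions are unwound, though $\textnormal{(ii)} \Rightarrow \textnormal{(i)}$ requires the observation that $\tau$ itself is visible inside the multiplication of $A \otimes_\tau B$, which lets one bootstrap continuity from Theorem~\ref{thm:continuous equals sweedler}.
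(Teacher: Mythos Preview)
Your argument for (iii)$\Rightarrow$(i) is correct and matches the paper. Your ``more precisely'' version of (ii)$\Rightarrow$(i)---factoring $\tau$ as $m_{A \otimes_\tau B} \circ (i_B \otimes i_A)$ and invoking continuity of multiplication from Theorem~\ref{thm:continuous equals sweedler}---is valid and in fact cleaner than the paper's argument, which instead produces, for each basic open $N \subseteq A \otimes^! B$, an explicit open $K \subseteq B \otimes^! A$ with $\tau(K) \subseteq N$ by squeezing an ideal of $A \otimes_\tau B$ between them. (The phrase ``$B \otimes_\tau A$'' in your first formulation is meaningless, but the ``more precisely'' clause repairs it.)

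Your (i)$\Rightarrow$(ii), however, has two genuine gaps. First, the reformulation of~(ii) is backwards: continuity of the map in Lemma~\ref{lem:compatible topologies} is precisely the statement that every $K \in \F(A \otimes_\tau B)$ contains some $I \otimes B + A \otimes J$, so what remains to prove for~(ii) is the \emph{opposite} inclusion of topologies---that every basic open $N = I \otimes B + A \otimes J$ of $A \otimes^! B$ contains some element of $\F(A \otimes_\tau B)$. As written, your argument begins with $K$ and concludes that $K$ is open in $A \otimes^! B$, which is already known and does not use~(i). Second, and more seriously, continuity of $\tau$ does \emph{not} give the separate containments $\tau(B \otimes I_1) \subseteq I_0 \otimes B$ and $\tau(J_1 \otimes A) \subseteq A \otimes J_0$ that you invoke: the subspace $I_0 \otimes B$ is not open in $A \otimes^! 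B$ unless $B$ is finite-dimensional, so it cannot be pulled back. Continuity yields only the combined containment $\tau(J_1 \otimes A + B \otimes I_1) \subseteq I_0 \otimes B + A \otimes J_0$, and with only this your candidate $I \otimes B + A \otimes J$ need not be a two-sided ideal of $A \otimes_\tau B$.

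The paper's repair is not to show that any such subspace is itself an ideal, but rather to start from a basic open $N = I \otimes B + A \otimes J$, use continuity to find $I_0, J_0$ with $\tau(B \otimes I_0 + J_0 \otimes A) \subseteq N$, and then observe that $N$ is an $(A,B)$-sub\emph{bimodule} of $A \otimes_\tau B$. Since $I_0 \otimes B$ is already a right ideal, one computes $(A \otimes_\tau B)(I_0 \otimes B) = A \cdot \tau(B \otimes I_0) \cdot B \subseteq A \cdot N \cdot B = N$, and similarly for $A \otimes J_0$; hence the two-sided ideal \emph{generated} by $I_0 \otimes B + A \otimes J_0$ lies in $N$, which is exactly what is needed.
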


\begin{proof}
Note that in the arguments below, while we carefully distinguish between topologies in $A \otimes^! B$ and $A \otimes_\tau B$, many computations are made by identifying both of their underlying vector spaces with  $A \otimes B$. We trust that this will not cause confusion for the vigilant reader.

(ii)$\implies$(i): 
Fix a basic open neighorhood of zero in $\F(A \otimes^! B)$ of the form $N = I \otimes B + A \otimes J$ for $I \in \F(A)$ and $J \in \F(B)$. To prove continuity of $\tau$, we must produce $K \in \F(B \otimes^! A)$ such that $\tau(K) \subseteq N$.  By condition~(ii), there exits an open ideal $U \in \F(A \otimes_\tau B)$ such that $U \subseteq N$ as subspaces of $A \otimes B$, and by Lemma~\ref{lem:compatible topologies} there exists a basic open subspace of $A \otimes^! B$ contained in $U$. Altogether, this means that there exist $I_0 \in \F(A)$ and $J_0 \in \F(B)$ such that 
\[
I_0 \otimes B + A \otimes J_0 \subseteq U \subseteq I \otimes B + A \otimes J.
\]
Note that $K := B \otimes I_0 + J_0 \otimes A$ is an element of $\F(B \otimes^! A)$. We claim that $\tau(K) \subseteq N$ as required above. We compute by interpreting $\tau$ in terms of multiplication in $A \otimes_\tau B$ and liberally applying the linear identification of $A \otimes^! B$ with $A \otimes_\tau B$:
\begin{align*}
\tau(K) &= \tau(B \otimes I_0) + \tau(J_0 \otimes A) \\
&= (1 \otimes B)(I_0 \otimes 1) + (1 \otimes J_0)(A \otimes 1) \\
&\subseteq (1 \otimes B) U + U (A \otimes 1) \subseteq U,
\end{align*}
because $U$ is an ideal of $A \otimes_\tau B$. It follows that $\tau(K) \subseteq U \subseteq N$ as desired.

(i)$\implies$(ii): 
To establish~(ii), it suffices to show that the natural map $A \otimes^! B \to A \otimes_\tau B$ of Lemma~\ref{lem:compatible topologies} is open; because this map is linear, we need only test at neighborhoods of zero. So fix $I \in \F(A)$ and $J \in \F(B)$ which form a basic open neighborhood of zero 
\[
N = I \otimes B + A \otimes J \in \F(A \otimes^! B).
\] 
Let $\sigma = \sigma_{A,B} \colon A \otimes^! B \to B \otimes^! A$ denote the tensor swap, which is clearly continuous. Since $\tau$ is assumed to be continuous, it follows that $\theta := \tau \circ \sigma \colon A \otimes^! B \to A \otimes^! B$ is also continuous. 
Then $\theta^{-1}(N)$ is open in $A \otimes^! B$, so there exist $I_0 \in \F(A)$ and $J_0 \in \F(B)$ such that 
\[
N_0 := I_0 \otimes B + A \otimes J_0 \subseteq \theta^{-1}(N). 
\]
Let $U$ denote the ideal of $A \otimes_\tau B$ generated by the subspace $N_0$. Because $U$ contains the subspace $N_0$ of finite codimension, we also have $U \in \F(A \otimes_\tau B)$. We will show below that $U \subseteq N$.
This will imply that $N$ is open in $A \otimes_\tau B$ so that~(ii) will be established. 

Since $N_0 = I_0 \otimes B + A \otimes J_0$, to prove that the ideal $U$ generated by this subspace lies in the subspace $N$ it suffices to show that the ideals separately generated by $I_0 \otimes B$ and $A \otimes J_0$ both lie in $N$. It is straightforward to see that $I_0 \otimes B$ is a right ideal within $A \otimes_\tau B$. To verify that it is also a left ideal, we make note of the following facts:
\begin{itemize}
\item $I_0 \otimes B \subseteq N_0 \subseteq \theta^{-1}(N)$ implies that $\tau(B \otimes I_0) = \theta(I_0 \otimes B) \subseteq N$,
\item $N$ is an $(A,B)$-subbimodule of $A \otimes_\tau B$.
\end{itemize}
Thus we may compute using the product in $A \otimes_\tau B$ as follows:
\[
(A \otimes_\tau B)  (I_0 \otimes B) = A \tau(B \otimes I_0) B \subseteq A N B = N.
\]
So $I_0 \otimes B$ generates a subideal of $N$, and a symmetric argument shows that the same is true for $A \otimes J_0$. It follows that $U = (A \otimes_\tau B) N_0 (A \otimes_\tau B) \subseteq N$ so that~(ii) is established.

(iii)$\implies$(i): Fix a basic open neighborhood of zero $U = I \otimes B + A \otimes J$ in $A \otimes^! B$ with $I \in \F(A)$ and $J \in \F(B)$. Then there exist elements $I_\alpha$ and $J_\beta$ of the neighborhood bases in~(iii) such that $I_\alpha \subseteq I$ and $J_\beta \subseteq J$. Thus we may produce the basic open neighborhood $V = B \otimes I_\alpha + J_\beta \otimes A \in \F(B \otimes^! A)$ that satisfies
\begin{align*}
\tau(V) &= \tau(B \otimes I_\alpha) + \tau(J_\beta \otimes A) \\
&\subseteq I_\alpha \otimes B + A \otimes J_\beta \\
&\subseteq U.
\end{align*}
Because $\tau$ is linear, this suffices to prove that $\tau$ is continuous.
\end{proof}

The following example illustrates that twisting maps can easily fail to be continuous.

\begin{example}\label{ex:not continuous}
For the algebras $A = k[x]$ and $B = k[y]$, it is possible to choose twisting maps $\tau \colon B \otimes A \to A \otimes B$ such that $A \otimes_\tau B$ has no proper ideals of finite codimension. 
For instance, if $k$ has characteristic zero and we set
\[
\tau(y^m \otimes f) = \sum_{i=0}^m \binom{m}{i} \, \partial_x^{m-i} f \otimes y^i,
\]
then $A \otimes_\tau B = A_1(k)$ is the first Weyl algebra, which infamously has no nonzero finite-dimensional representations.
In such cases, the finite topology on $A \otimes_\tau B$ is the indiscrete topology, which radically differs from the topology on $A \otimes^! B$. It follows from Proposition~\ref{prop:topological conditions} that $\tau$ is not continuous. We also see that $(A \otimes_\tau B)^\circ = 0$ is not isomorphic to any cotwisted tensor product of the form $A^\circ \otimes^\phi B^\circ$, since such a coalgebra is nonzero by construction.
\end{example}

We now arrive at the major result of this section. It shows that continuity of the twisting map with respect to the cofinite topologies is sufficient to allow the finite dual of a twisted tensor product to be a cotwisted tensor product of the expected form.

\begin{theorem}\label{thm:twisted dual}
Let $A$ and $B$ be $k$-algebras with a twisting map $\tau$. If $\tau \colon B \otimes^! A \to A \otimes^! B$ is continuous, where $A$ and $B$ are endowed with their cofinite topologies, then the continuous dual 
\[
\tau^\circ = \Top_k(\tau,k) \colon A^\circ \otimes B^\circ \to B^\circ \otimes A^\circ
\]
is a cotwisting map and the continuous dual of the topological isomorphism $A \otimes^! B \to A \otimes_\tau B$ yields an isomorphism of coalgebras
\[
(A \otimes_\tau B)^\circ \xrightarrow{\sim} A^\circ \otimes^{\tau^\circ} B^\circ.
\]
\end{theorem}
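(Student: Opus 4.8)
The plan is to leverage the machinery of Theorem~\ref{thm:tensor dual} together with the characterization in Proposition~\ref{prop:topological conditions}, translating everything through the strong monoidal structure of the continuous dual functor on $\CF_k$. The central observation is that all of the algebras involved ($A$, $B$, and $A \otimes_\tau B$) carry cofinite topologies, so they lie in $\CF_k$; moreover, by Theorem~\ref{thm:continuous equals sweedler} their multiplications are continuous for $\otimes^!$, so they are monoid objects in $(\CF_k, \otimes^!, k)$, equivalently comonoid objects in the opposite tensor category. Applying the strong monoidal functor $(-)^\circ$ sends comonoid objects to comonoid objects, recovering the finite dual coalgebras $A^\circ$, $B^\circ$, and $(A \otimes_\tau B)^\circ$.

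\medskip

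First I would record that continuity of $\tau$ gives, via Proposition~\ref{prop:topological conditions}, that the identity map $A \otimes^! B \to A \otimes_\tau B$ is a homeomorphism; hence it is an isomorphism in $\CF_k$ and its continuous dual is a linear isomorphism $(A \otimes_\tau B)^\circ \xrightarrow{\sim} (A \otimes^! B)^\circ \xrightarrow{\sim} A^\circ \otimes B^\circ$, the last step being the strong monoidal structure map $\Phi_{A,B}$ from Theorem~\ref{thm:tensor dual}. Next I would verify that $\tau^\circ$ is a cotwisting map. Since $\tau \colon B \otimes^! A \to A \otimes^! B$ is continuous and $(-)^\circ$ is a (strong monoidal) functor, $\tau^\circ$ is a morphism $(A \otimes^! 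B)^\circ \to (B \otimes^! A)^\circ$; composing with the strong monoidal isomorphisms identifies it with a linear map $A^\circ \otimes B^\circ \to B^\circ \otimes A^\circ$. The normality and multiplicativity equations~\eqref{eq:normal}, \eqref{eq:multiplicative} defining a twisting map are expressible as commuting diagrams in $(\CF_k, \otimes^!, k)$ built from $\tau$, $m_A$, $m_B$, $\eta_A$, $\eta_B$ and the structural isomorphisms; applying the strong monoidal functor $(-)^\circ$ turns these into exactly the conormality and comultiplicativity equations~\eqref{eq:conormal}, \eqref{eq:comultiplicative} for $\tau^\circ$ with respect to $\Delta_{A^\circ}$, $\Delta_{B^\circ}$, $\eps_{A^\circ}$, $\eps_{B^\circ}$. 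By Proposition~\ref{prop:conormal comultiplicative} this shows $\tau^\circ$ is a cotwisting map, so $A^\circ \otimes^{\tau^\circ} B^\circ$ is a well-defined coalgebra.

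\medskip

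The remaining task is to check that the linear isomorphism above is in fact an isomorphism of coalgebras, i.e.\ that it intertwines the comultiplication $m_{A \otimes_\tau B}^\circ$ on $(A \otimes_\tau B)^\circ$ with $\Delta_{\tau^\circ}$ on $A^\circ \otimes^{\tau^\circ} B^\circ$. Here I would argue diagrammatically: the multiplication $m_\tau$ is, by definition~\eqref{eq:m rho}, the composite $(m_A \otimes m_B) \circ (\id_A \otimes \tau \otimes \id_B)$, a composite of morphisms in $(\CF_k, \otimes^!, k)$ (using continuity of $m_A$, $m_B$, $\tau$ and of the identity map $A \otimes^! B \to A \otimes_\tau B$). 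Dualizing this composite with the strong monoidal functor $(-)^\circ$, and tracking how the coherence isomorphisms $\Phi$ rearrange tensor factors, produces precisely $(\id_{A^\circ} \otimes \tau^\circ \otimes \id_{B^\circ}) \circ (\Delta_{A^\circ} \otimes \Delta_{B^\circ})$ — which is the definition of $\Delta_{\tau^\circ}$. Matching of counits is the easy dual of the statement $\eta_{A \otimes_\tau B} = \eta_A \otimes \eta_B$. I expect the main obstacle to be the bookkeeping in this last diagram chase: one must be careful about the direction of arrows in the opposite tensor category, about which associativity and symmetry coherence cells of $\otimes^!$ intervene when rewriting $m_\tau$ as a string diagram, and about the compatibility of $\Phi$ with the braiding $\sigma$ hidden inside the definition of $m_\tau$ (the middle factor exchange). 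Once the coherence of $(-)^\circ$ as a symmetric strong monoidal functor is invoked, the identification is forced, but writing it cleanly is where the care is needed.
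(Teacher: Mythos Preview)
Your proposal is correct and follows essentially the same route as the paper: use Proposition~\ref{prop:topological conditions} to upgrade the identity map to a homeomorphism, dualize the normality/multiplicativity identities through the strong monoidal functor of Theorem~\ref{thm:tensor dual} to obtain conormality/comultiplicativity, and then dualize the factorization $m_\tau = (m_A \otimes m_B)\circ(\id_A \otimes \tau \otimes \id_B)$ term by term to identify the comultiplication. The only remark is that there is no hidden braiding in $m_\tau$ to worry about---the ``middle factor exchange'' is precisely $\tau$ itself, not $\sigma$---so the bookkeeping is slightly lighter than you anticipate.
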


\begin{proof}
Because $\tau$ is continuous, its continuous dual $\tau^\circ = \Top_k,(\tau,k)$ is defined. Recall that the topologies on $A$ and $B$ are cofinite. Then by Theorem~\ref{thm:tensor dual}, we can view this as a map
\[
\tau^\circ \colon A^\circ \otimes B^\circ \xrightarrow{\sim}  (A \otimes^! B)^\circ \to (B \otimes^! A)^\circ \xrightarrow{\sim} B^\circ \otimes A^\circ
\]
By Proposition~\ref{prop:normal multiplicative}, $\tau$ is normal and multiplicative. Theorem~\ref{thm:tensor dual} now implies that $\tau^\circ$ is conormal and comultiplicative, since these are formally dual properties. 
Thus it follows from Proposition~\ref{prop:conormal comultiplicative} that $\tau^\circ$ is a cotwisting map for $A^\circ$ and $B^\circ$.

Denote $S = A \otimes_\tau B$ and $C = A^\circ \otimes^{\tau^\circ} B^\circ$.
By Theorem~\ref{thm:continuous equals sweedler}, the finite dual of $S$ and its comultiplication respectively coincide with the continuous dual of $S$ and its multiplication. By Proposition~\ref{prop:topological conditions} the linear isomorphism 
\[
\Phi \colon A \otimes^! B \to S
\] 
is a homeomorphism. Thus the continuous dual functor yields a linear isomorphism 
\begin{equation}\label{eq:dual coalgebra}
S^\circ \xrightarrow{\sim} (A \otimes^! B)^\circ \cong A^\circ \otimes B^\circ = C, 
\end{equation}
where we identify the vector space $A^\circ \otimes B^\circ$ with the cotwisted tensor product coalgebra $C$.
It only remains to show that this is a morphism of coalgebras. 

To achieve this goal, we will treat the isomorphism~\eqref{eq:dual coalgebra} as the identity map. (This is reasonable because it is defined in terms of the linear homeomorphism $\Phi$ above, which the identity on the underlying vector space.) We must then show that the comultiplication of $S^\circ$ and $C$ coincide. 
By definition of the twisted tensor product $S = A \otimes_\tau B$, we have 
\[
m_S = (m_A \otimes m_B) \circ (\id_A \otimes \tau \otimes \id_B).
\]
Applying the continuous dual functor to the above formula and invoking Theorem~\ref{thm:tensor dual} in the third equality below yields
\begin{align*}
\Delta_{S^\circ} &= (m_S)^\circ \\
&= (\id_A \otimes \tau \otimes \id_B)^\circ \circ (m_A \otimes m_B)^\circ \\
&= (\id_A^\circ \otimes \tau^\circ \otimes \id_B^\circ) \circ (m_A^\circ \otimes m_B^\circ) \\
&= (\id_{A^\circ} \otimes \tau^\circ \otimes \id_{B^\circ} ) \circ (\Delta_{A^\circ} \otimes \Delta_{B^\circ}) \\
&= \Delta_C
\end{align*}
as desired. An easier computation similarly shows that the unit $\eta_S = \eta_A \otimes \eta_B$ dualizes to the counit $\eps_C = \eps_{A^\circ} \otimes \eps_{B^\circ}$. 
\end{proof}

\begin{example}
For algebras $A$ and $B$, the tensor product algebra has dual coalgebra given by
\[
(A \otimes B)^\circ \cong A^\circ \otimes B^\circ.
\]
While this is straightforward to prove from~\eqref{eq:colimit}, it also follows immediately from Theorem~\ref{thm:twisted dual}. This is the special case where $\tau = \sigma_{B,A}$ is the ``tensor swap'' map, which is evidently continuous.
\end{example}

\section{Applications of twisted tensor product duality}\label{sec:applications}

In this section, we specialize Theorem~\ref{thm:twisted dual} to a few situations of particular interest. This includes Ore extensions, smash products with Hopf algebras, and bitwisted tensor product bialgebras.

In order to apply Theorem~\ref{thm:twisted dual} in any particular situation, we must know that the twisting map $\tau \colon B \otimes A \to A \otimes B$ is continuous.
This raises the important question of how to recognize when a given twisting map is continuous. Our next goal will be to provide a sufficient condition for continuity of $\tau$ in Theorem~\ref{thm:centralize} below, amounting to the existence of large subalgebras of $A$ and $B$ that respectively centralize $B$ and $A$. 

We will say that an extension of rings $R_0 \subseteq R$ is \emph{finite} if $R$ is finitely generated as both a left $R_0$-module and a right $R_0$-module. 

\begin{lemma}\label{lem:finite extension}
Suppose that $A_0 \subseteq A$ is a finite extension of $k$-algebras. 
Then an ideal $I \unlhd A$ satisfies $I \in \F(A)$ if and only if there exists an ideal $I_0 \in \F(A_0)$ such that $I_0 \subseteq I$.
\end{lemma}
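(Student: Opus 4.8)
The statement is a two-directional criterion, so I would prove the two implications separately. The easy direction is $(\Leftarrow)$: if $I_0 \in \F(A_0)$ and $I_0 \subseteq I$, I want to conclude $I \in \F(A)$, i.e.\ $\dim_k(A/I) < \infty$. Since $I \supseteq I_0$ there is a surjection $A/I_0 A \cdots$ --- more precisely $A/A I_0 A \twoheadrightarrow A/I$, so it suffices to bound $\dim_k A/(A I_0 A)$. Using that $A$ is finitely generated as a left $A_0$-module, say $A = \sum_{j=1}^m A_0 a_j$, one gets $A/(A I_0 A)$ spanned over $k$ by the images of products involving the $a_j$'s and coset representatives of $A_0/I_0$; the cleanest way is to note $A/A I_0$ is finitely generated as a right $A_0$-module (again by finiteness of the extension, now on the right) and killed on the right by $I_0$, hence a finitely generated module over the finite-dimensional algebra $A_0/I_0$, so finite-dimensional; then $A/I$ is a quotient. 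I expect this direction to be a short module-theoretic computation.

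The substantive direction is $(\Rightarrow)$: given $I \in \F(A)$, produce $I_0 \in \F(A_0)$ with $I_0 \subseteq I$. The natural candidate is $I_0 := I \cap A_0$, but this is an ideal of $A_0$ that need not have finite codimension in $A_0$ in general --- however here it does, and that is the point. The map $A_0 \to A/I$ has kernel $I \cap A_0$, so $A_0/(I\cap A_0)$ embeds as a $k$-subspace of the finite-dimensional space $A/I$; hence $\dim_k A_0/(I \cap A_0) \le \dim_k A/I < \infty$, giving $I \cap A_0 \in \F(A_0)$ immediately, and of course $I\cap A_0 \subseteq I$. So actually $(\Rightarrow)$ requires nothing about finiteness of the extension at all. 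I would present it exactly this way, taking $I_0 = I \cap A_0$.

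Thus the only place the finiteness hypothesis on $A_0 \subseteq A$ is used is the $(\Leftarrow)$ direction, and that is where the main (modest) obstacle lies: one must be careful that $I_0$ is merely an ideal of $A_0$, not of $A$, so passing to $I$ requires forming the two-sided ideal $A I_0 A$ of $A$ and showing \emph{it} has finite codimension, which is where both the left and right module-finiteness of the extension enter. I would organize it as: (i) $A I_0$ has finite codimension in $A$ because $A/A I_0 \cong (A/A_0$-generators$) \otimes \cdots$ --- concretely, right-multiplication makes $A/AI_0$ a right $A_0/I_0$-module, finitely generated since $A$ is a f.g.\ left $A_0$-module hence $A/AI_0$ is a f.g.\ right... wait, I should use right finiteness here: $A = \sum a_j A_0$, so $A/AI_0 = \sum \bar a_j (A_0/I_0)$ is finite-dimensional; (ii) then $A/AI_0A$ is a quotient of $A/AI_0$, still finite-dimensional; (iii) $A/I$ is a quotient of $A/AI_0A$ since $AI_0A \subseteq AIA = I$. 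This completes the proof.
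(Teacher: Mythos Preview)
Your proposal is correct and follows essentially the same approach as the paper: for $(\Rightarrow)$ you take $I_0 = I \cap A_0$ and use the embedding $A_0/(I\cap A_0)\hookrightarrow A/I$, and for $(\Leftarrow)$ you reduce to the extended ideal $A I_0 A$ and bound its codimension using finite generation of $A$ over $A_0$. The only cosmetic difference is that the paper expresses $A/AI_0A$ as a quotient of $A \otimes_{A_0} (A_0/I_0) \otimes_{A_0} A$ using both left and right generators simultaneously, whereas your two-step argument (first $A/AI_0$, then quotient to $A/AI_0A$) in fact reveals that only one-sided module finiteness is needed for that direction.
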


\begin{proof}
If $I \in \F(A)$ then we may set $I_0 = A \cap A_0$. The embedding $A_0/I_0 = A_0/(A_0 \cap I) \hookrightarrow A/I$ ensures that $I_0 \in \F(A)$. 

To establish the converse, note that if $I_0 \in \F(A)$ with $I_0 \subseteq I$, then the ideal $I' = AI_0A$ obtained by extending $I_0$ to $A$ satisfies $I' \subseteq I$, and if $I'$ has finite codimension then the same is true for $I$. Replacing $I$ with $I'$, we may thus reduce to the case where $I = A I_0 A$ for some $I_0 \in \F(A_0)$. 
Because $A$ is finite over $A_0$, we may fix $x_1,\dots x_m, y_1, \dots, y_n \in A$ with $A = \sum x_i A_0 = \sum A_0 y_j$. Then
\begin{align*}
A/I &= A/(AI_0A)  \\
&\cong A \otimes_{A_0} (A_0/I_0) \otimes_{A_0} A \\
&= \sum_{i,j} x_i \otimes_{A_0} (A_0/I_0) \otimes_{A_0} y_j.
\end{align*}
Since $A_0/I_0$ is finite-dimensional, we see that the same will be true for $A/I$. So $I \in \F(A)$ as desired. 
\end{proof}

\begin{theorem}\label{thm:centralize}
Let $\tau \colon B \otimes A \to A \otimes B$ be a twisting map. Suppose that there exist subalgebras $A_0 \subseteq A$ and $B_0 \subseteq B$ satisfying the following properties:
\begin{enumerate}[label=\textnormal{(\roman*)}]
\item The extensions $A_0 \subseteq A$ and $B_0 \subseteq B$ are both finite. 
\item The restriction of $\tau$ to each of the subspaces $B \otimes A_0$ and $B_0 \otimes A$ agrees with corresponding restriction of the tensor swap $\sigma_{B,A} \colon B \otimes A \to A \otimes B$.
\end{enumerate}
Then $\tau$ is continuous, so that $(A \otimes_\tau B)^\circ \cong A^\circ \otimes^{\tau^\circ} B^\circ$. 
\end{theorem}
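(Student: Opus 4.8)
The plan is to verify condition~(iii) of Proposition~\ref{prop:topological conditions}, which immediately gives continuity of $\tau$ and hence the coalgebra isomorphism via Theorem~\ref{thm:twisted dual}. Call a subspace $V\subseteq A$ \emph{$\tau$-stable} if $\tau(B\otimes V)\subseteq V\otimes B$, and dually call a subspace $W\subseteq B$ \emph{$\tau$-costable} if $\tau(W\otimes A)\subseteq A\otimes W$. Two observations are immediate: $A$ is $\tau$-stable trivially, since the codomain of $\tau$ is $A\otimes B$; and by hypothesis~(ii) \emph{every} subspace $V\subseteq A_{0}$ is $\tau$-stable, because $\tau$ restricts to the tensor swap on $B\otimes A_{0}$ and $\sigma_{B,A}(B\otimes V)=V\otimes B$. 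Symmetrically, $B$ and every subspace of $B_{0}$ are $\tau$-costable.

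The crux of the argument is that a product $V_{1}V_{2}$ of $\tau$-stable subspaces of $A$, formed inside $A$, is again $\tau$-stable; by associativity of multiplication this extends to any finite product of $\tau$-stable factors. This is where the multiplicativity of $\tau$ (Proposition~\ref{prop:normal multiplicative}) enters, through the first identity of~\eqref{eq:multiplicative}: given $b\in B$ and $v_{i}\in V_{i}$, one writes $\tau(b\otimes v_{1})$ as a sum of simple tensors in $V_{1}\otimes B$, applies $\tau$ again in the second tensor slot to land in $V_{2}\otimes B$, and reads off from the identity that $\tau(b\otimes v_{1}v_{2})$ is a sum of tensors $(v_{1}'v_{2}')\otimes b'$ with $v_{i}'\in V_{i}$, hence lies in $V_{1}V_{2}\otimes B$. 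The mirror-image computation with the second identity of~\eqref{eq:multiplicative} shows that products of $\tau$-costable subspaces of $B$ are $\tau$-costable. This bookkeeping with the two multiplicativity relations is the only part of the proof that requires real care; everything else is a direct application of results already established.

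With this in hand, fix $I\in\F(A)$. The intersection $I_{0}:=A_{0}\cap I$ is an ideal of $A_{0}$, and $I_{0}\in\F(A_{0})$ because $A_{0}/I_{0}$ embeds into the finite-dimensional space $A/I$. Put $\tilde{I}:=AI_{0}A$, the two-sided ideal of $A$ generated by $I_{0}$. Since $1\in A$ we have $I_{0}\subseteq\tilde{I}\subseteq I$, and $\tilde{I}\in\F(A)$ by Lemma~\ref{lem:finite extension} together with hypothesis~(i) that $A_{0}\subseteq A$ is a finite extension. Moreover $\tilde{I}=A\cdot I_{0}\cdot A$ is a product of three $\tau$-stable subspaces ($A$, the subspace $I_{0}\subseteq A_{0}$, and $A$ again), so it is $\tau$-stable. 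As $I$ ranges over $\F(A)$ the ideals $\tilde{I}$ all lie in $\F(A)$, and every open subspace of $A$ contains some $I\in\F(A)$ and hence some $\tilde{I}$; thus $\{\tilde{I}\}$ is a neighborhood basis of zero for the cofinite topology on $A$. The symmetric construction with $B_{0}$ produces, for each $J\in\F(B)$, a $\tau$-costable ideal $\tilde{J}:=B(B_{0}\cap J)B\in\F(B)$ with $\tilde{J}\subseteq J$, giving a neighborhood basis of zero in $B$.

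These two families verify condition~(iii) of Proposition~\ref{prop:topological conditions}: by construction $\tau(B\otimes\tilde{I})\subseteq\tilde{I}\otimes B$ and $\tau(\tilde{J}\otimes A)\subseteq A\otimes\tilde{J}$. Therefore $\tau\colon B\otimes^{!}A\to A\otimes^{!}B$ is continuous, and Theorem~\ref{thm:twisted dual} delivers the coalgebra isomorphism $(A\otimes_{\tau}B)^{\circ}\cong A^{\circ}\otimes^{\tau^{\circ}}B^{\circ}$, as claimed.
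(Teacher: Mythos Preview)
Your proof is correct and follows essentially the same route as the paper's: both verify condition~(iii) of Proposition~\ref{prop:topological conditions} by showing that the ideals $AI_{0}A$ with $I_{0}\in\F(A_{0})$ form a $\tau$-stable neighborhood basis of zero in $A$ (and symmetrically for $B$), invoking Lemma~\ref{lem:finite extension} for cofiniteness and the multiplicativity identities~\eqref{eq:multiplicative} for stability. Your abstraction ``products of $\tau$-stable subspaces are $\tau$-stable'' cleanly packages what the paper carries out as two explicit chains of containments (first for $AI_{0}$, then for $AI_{0}A$), but the underlying computation is identical.
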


\begin{proof}
It is enough to verify that $\tau$ satisfies the condition of Proposition~\ref{prop:topological conditions}(iii). 
By Lemma~\ref{lem:finite extension}, the family $\{I_\alpha\} \subseteq \F(A)$ of ideals in $A$ that are of the form $I_\alpha = A I_0 A$ for some $I_0 \in \F(A_0)$ is a neighborhood basis of zero in $A$.
We claim that every such ideal $I_\alpha = A I_0 A$ satisfies $\tau(B \otimes I_\alpha) \subseteq I_\alpha \otimes B$. First note that the multiplicative property~\eqref{eq:multiplicative} of $\tau$ yields
\begin{align*}
\tau(B \otimes AI_0) &= \tau \circ (\id_B \otimes m_A)(B \otimes A \otimes I_0) \\
&= (m_A \otimes \id_B) \circ (\id_A \otimes \tau) \circ (\tau \otimes \id_A) (B \otimes A \otimes I_0) \\
&= (m_A \otimes \id_B) \circ (\id_A \otimes \tau) (\tau(B \otimes A) \otimes I_0) \\ 
&\subseteq (m_A \otimes \id_B) \circ (\id_A \otimes \tau) (A \otimes B \otimes I_0) \\ 
&= (m_A \otimes \id_B) (A \otimes \tau(B \otimes I_0)) \\
&= (m_A \otimes \id_B) (A \otimes I_0 \otimes B) \\
&= AI_0 \otimes B.
\end{align*}
It then follows from another application of~\eqref{eq:multiplicative} again that
\begin{align*}
\tau(B \otimes I_\alpha) &= \tau(B \otimes (A I_0) A) \\
&= \tau \circ (\id_B \otimes m_A) (B \otimes AI_0 \otimes A) \\
&= (m_A \otimes \id_B) \circ (\id_A \otimes \tau) \circ (\tau \otimes \id_A) (B \otimes A I_0 \otimes A) \\
&= (m_A \otimes \id_B) \circ (\id_A \otimes \tau) (\tau(B \otimes A I_0) \otimes A) \\
&\subseteq (m_A \otimes \id_B) \circ (\id_A \otimes \tau) (A I_0 \otimes B \otimes A) \\
&= (m_A \otimes \id_B) (A I_0 \otimes \tau(B \otimes A)) \\
&\subseteq (m_A \otimes \id_B) (A I_0 \otimes A \otimes B) \\
&= A I_0 A \otimes B = I_\alpha \otimes B
\end{align*}
as desired.

We may similarly define a neighborhood basis of zero $\{J_\beta\} \subseteq \F(B)$ consisting of those ideals of the form $J_\beta = B J_0 B$ for some $J_0 \in \F(B_0)$. A symmetric argument will verify that these ideals satisfy $\tau(J_\beta \otimes A) \subseteq A \otimes J_\beta$. We now conclude from Proposition~\ref{prop:topological conditions} that $\tau$ is continuous, and the isomorphism of dual coalgebras follows from Theorem~\ref{thm:twisted dual}.
\end{proof}

We remark that condition~(ii) above, which serves to ``tame'' the behavior of a twisting map, is reminiscent of some other conditions that have been used to deduce good properties of twisting maps in the literature. For instance, \emph{strongly graded twists} (in the terminology of~\cite{connergoetz:property}) preserve the Koszul property~\cite{jlps, waltonwitherspoon}, allow for computation of Gerstenhaber brackets on Hochschild cohomology~\cite{kmoow}, and have been used to construct noncommutative graded isolated singularities~\cite{heueyama}.
Similarly, a condition introduced in~\cite[(3.3)]{wangshen:twisted} (relative to a generating set of an algebra) was used to deduce Artin-Schelter regularity of twisted tensor products and in~\cite{shenzhoulu:nakayama} to understand their Nakayama automorphisms.

\subsection{Application to Ore extensions}

Ore extensions are one of the most fundamental methods of constructing
noncommutative algebras. The above criterion applies nicely in the case of an Ore extension by a finite order automorphism. Interpreted geometrically, it verifies the intuition that an Ore extension $A[t;\theta, \delta]$ is like a ring of functions on a ``twisted product'' of the affine line and the noncommutative space corresponding to $A$. 

In order to describe the affine line in terms of a coalgebra,
we will recall the terminology of coalgebras of distributions on $k$-schemes and some relevant facts  from~\cite[Subsection~2.3]{reyes:qmax}. If $X$ is a scheme over $k$, the \emph{coalgebra of distributions} on $X$ is the direct limit  
\[
\Dist(X) = \dirlim_S \Gamma(S, \O_S)^*,
\]
where $S$ ranges over all closed subschemes of $X$ that are finite over $k$. If $X$ is affine then by~\cite[Proposition~2.13]{reyes:qmax} we have
\begin{equation}\label{eq:affine distributions}
\Dist(X) \cong \Gamma(X, \O_X)^\circ.
\end{equation}
If $k$ is algebraically closed and $X$ is of finite type over~$k$, then the grouplike elements of $\Dist(X)$ correspond to the closed points of $X$. In particular, if $A$ is a commutative affine $k$-algebra then the grouplike elements of $A^\circ \cong \Dist(\Spec A)$ are in bijection with the maximal spectrum of $A$.
The commutative algebra $k[t]$ is the coordinate ring of the affine line over~$k$, so~\eqref{eq:affine distributions} specializes to
\[
k[t]^\circ \cong \Dist(\A^1_k).
\]
The coalgebra of distributions on the affine line was described geometrically in~\cite[Example~2.20]{reyes:qmax}.

Our result applies to Ore extensions $A[t; \theta, \delta]$ where $\theta$ has finite order and $\delta$ satisfies a sutiably strong nilpotence condition. To phrase this condition, we make the following definition. In the free monoid $\{u,v\}^\star$ (written in Kleene star notation) on an alphabet of two letters $u$ and $v$, let $W_i^m$ denote the words of length $m$ with $i$ occurences of $u$. For a word $w \in \{u,v\}^\star$ and linear endomorphisms $\theta$ and $\delta$ of $A$, we let $w_{\theta, \delta} \in \End_k(A)$ be the endomorphism that is the image of $w$ after freely extending $u \mapsto \theta$ and $v \mapsto \delta$ to a monoid homomorphism $\{u,v\}^\star \to \End_k(A)$. (That is, $w_{\theta, \delta}$ is $w$ ``evaluated at $u = \theta$ and $v = \delta$.'')

Note that the Ore extension is a twisted tensor product $A[t; \theta] = A \otimes_\tau B$ defined by a twisting map $\tau \colon k[t] \otimes A \to A \otimes k[t]$. Of course, $\tau$ is determined by the condition $\tau(t \otimes a) = \sigma(a) \otimes t + \delta(a) \otimes 1$. Applying this inductively, one can verify that $\tau$ is defined generally, for $a \in A$, by:
\begin{equation}\label{eq:Ore twist}
\tau(t^m \otimes a) =  \sum_{i=0}^m \sum_{w \in W_i^m} w_{\theta, \delta}(a) \otimes t^i.
\end{equation}

An immediate consequence of the above formula is that $t^d \in k[t]$ centralizes $A$ in $A[t; \theta, \delta]$ if and only if the following condition holds:
\begin{equation}\label{eq:central power}
\theta^d = \id_A \mbox{ and } \sum_{w \in W_i^d} w_{\theta, \delta} = 0 \mbox{ for } 0 \leq i < d.
\end{equation}
The above implies that $\delta^d = 0$ so that $\delta$ is nilpotent. In fact, two special cases of~\eqref{eq:central power} are:
\begin{itemize}
\item $\theta^d = \id_A$ and $\delta = 0$;
\item $\theta = \id_A$ and $\delta^d = 0$.
\end{itemize}

\begin{corollary}\label{cor:Ore}
Let $A$ be an algebra 
with an automorphism $\theta$ and a left $\theta$-derivation $\delta$, both of which are $k$-linear. Let $A^\theta \subseteq A$ denote the fixed subalgebra. Assume that~\eqref{eq:central power} holds and that
$A$ is finite over the subring $A^\theta \cap \ker \delta$.
Then the finite dual of the Ore extension $A[t; \theta, \delta]$ is isomorphic to a cotwisted tensor product coalgebra
\[
A[t; \theta, \delta]^\circ \cong A^\circ \otimes^\phi \Dist(\A^1_k)
\]
for a suitable cotwisting map $\phi$.
\end{corollary}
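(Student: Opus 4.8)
The plan is to recognize the claimed statement as a direct instance of Theorem~\ref{thm:centralize}, applied to the twisted tensor product decomposition $A[t;\theta,\delta] = A \otimes_\tau k[t]$ with twisting map $\tau$ given by~\eqref{eq:Ore twist}. So the proof amounts to identifying suitable subalgebras $A_0 \subseteq A$ and $B_0 \subseteq B = k[t]$ satisfying conditions (i) and (ii) of that theorem, and then translating the conclusion into the language of distribution coalgebras via~\eqref{eq:affine distributions}.

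First I would set $A_0 = A^\theta \cap \ker\delta$ and $B_0 = k[t^d] \subseteq k[t]$, where $d$ is the integer appearing in~\eqref{eq:central power}. For condition (i): the extension $A_0 \subseteq A$ is finite by hypothesis, and $k[t^d] \subseteq k[t]$ is finite because $k[t]$ is free of rank $d$ as a module over $k[t^d]$ (with basis $1, t, \dots, t^{d-1}$), and $k[t]$ is commutative so left- and right-finiteness coincide. For condition (ii), I need to check that $\tau$ restricted to $k[t] \otimes A_0$ and to $k[t^d] \otimes A$ agrees with the tensor swap $\sigma_{k[t],A}$. For the first: if $a \in A_0 = A^\theta \cap \ker\delta$, then $\theta(a) = a$ and $\delta(a) = 0$, so every word $w_{\theta,\delta}(a)$ with at least one occurrence of $v$ vanishes, and the unique word $u^m$ with no $v$ contributes $\theta^m(a) \otimes t^m = a \otimes t^m$; thus $\tau(t^m \otimes a) = a \otimes t^m$ by~\eqref{eq:Ore twist}, which is exactly $\sigma(t^m \otimes a)$. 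For the second restriction: we need $\tau(t^{dm} \otimes a) = a \otimes t^{dm}$ for all $a \in A$ and $m \geq 0$. Since $t^d$ centralizes $A$ by~\eqref{eq:central power}, the element $t^{dm}$ centralizes $A$ as well (powers of a central element are central), and reading off what this means via~\eqref{eq:Ore twist} gives precisely $\tau(t^{dm} \otimes a) = a \otimes t^{dm}$. This verifies condition (ii).

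Having checked both hypotheses, Theorem~\ref{thm:centralize} immediately yields that $\tau$ is continuous and that $A[t;\theta,\delta]^\circ \cong A^\circ \otimes^{\tau^\circ} k[t]^\circ$ as coalgebras, with $\phi = \tau^\circ$ the induced cotwisting map. The final step is cosmetic: by~\eqref{eq:affine distributions} applied to the affine line, $k[t]^\circ \cong \Dist(\A^1_k)$, so the isomorphism can be rewritten as $A[t;\theta,\delta]^\circ \cong A^\circ \otimes^\phi \Dist(\A^1_k)$, which is the assertion.

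I do not expect a genuine obstacle here, since the real content lives in Theorem~\ref{thm:centralize}; the only point requiring a little care is the bookkeeping in condition (ii), namely confirming that the combinatorial identity~\eqref{eq:central power} encoding ``$t^d$ centralizes $A$'' is exactly what is needed to force $\tau(t^{dm}\otimes a) = a\otimes t^{dm}$, and likewise that membership in $A^\theta \cap \ker\delta$ kills all the ``mixed'' words in~\eqref{eq:Ore twist}. One should also note explicitly that $A^\theta \cap \ker\delta$ is genuinely a subalgebra (it is the intersection of the fixed subalgebra of the automorphism $\theta$ with the kernel of the $\theta$-derivation $\delta$, the latter being a subalgebra precisely because $\delta(ab) = \theta(a)\delta(b) + \delta(a)b$), so that the hypothesis ``$A$ finite over $A^\theta \cap \ker\delta$'' makes sense and feeds directly into condition~(i).
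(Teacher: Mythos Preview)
Your proposal is correct and follows essentially the same approach as the paper: both choose $A_0 = A^\theta \cap \ker\delta$ and $B_0 = k[t^d]$, verify conditions~(i) and~(ii) of Theorem~\ref{thm:centralize}, and then invoke $k[t]^\circ \cong \Dist(\A^1_k)$. Your write-up is in fact slightly more explicit in checking condition~(ii) via~\eqref{eq:Ore twist} and in observing that $A^\theta \cap \ker\delta$ is a subalgebra.
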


\begin{proof} 
Set $B = k[t]$, so that $A[t; \theta, \delta] = A \otimes_\tau B$ as in~\eqref{eq:Ore twist}.
We will show that $\tau$ satisfies the condition of Theorem~\ref{thm:centralize}. Let $A_0 = A^\theta \cap \ker \delta \subseteq A$, so that $A$ is finite over $A_0$ by hypothesis. Then certainly $A_0$ centralizes $t \in B$ within the Ore extension $A[t; \theta]$. It follows that the restriction of $\tau$ to $B \otimes A_0$ coincides with $\sigma_{B,A_0}$.

Next, let $B_0 = k[t^d] \subseteq k[t] = B$. Certainly $B$ is generated as a $B_0$-module by the finite set $\{1,t,\dots,t^{d-1}\}$. Condition~\eqref{eq:central power} means that $t^d$ centralizes $A$ in $A[t; \theta, \delta]$. It follows that the restriction of $\tau$ to $B_0 \otimes A$ coincides with $\sigma_{B_0, A}$. 

It now follows from Theorem~\ref{thm:centralize} that $\tau$ is continuous. So Theorem~\ref{thm:twisted dual} yields the isomorphism
\[
A[t; \theta, \delta]^\circ = (A \otimes_\tau B)^\circ \cong A^\circ \otimes^{\tau^\circ} B^\circ.
\]
The conclusion now follows from the fact that $B^\circ = k[t]^\circ \cong \Dist(\A^1_k)$ as in~\eqref{eq:affine distributions}, where $\phi$ corresponds to $\tau^\circ$ under this isomorphism.
\end{proof}

\subsection{Application to smash products and bitwisted tensor products}

We now turn our attention to smash product algebras.
Suppose $H$ a Hopf algebra and $A$ is a left $H$-module algebra. The smash product is a particular case of a twisted tensor product
\[
A \# H = A \otimes_\tau H
\]
where the linear map $\tau \colon H \otimes A \to A \otimes H$ is given in Sweedler notation by 
\begin{equation}\label{eq:smash twist}
\tau(h \otimes a) =  \sum h_{(1)}(a) \otimes h_{(2)}.
\end{equation}
It is well known~\cite[Chapter~4]{montgomery:hopf} that this endows $A \# H$ with the structure of an algebra, so that $\tau$ is a twisting map.
If we write the $H$-module product as $\lambda \colon H \otimes A \to A$, then the twisting map can be written as the composite 
\[
\tau \colon H \otimes A \xrightarrow{\Delta \otimes \id_A} H \otimes H \otimes A \xrightarrow{\id_H \otimes \sigma} H \otimes A \otimes H
\xrightarrow{\lambda \otimes \id_H} A \otimes H.
\]
Note that if $\lambda \colon H \otimes^! A \to A$ is continuous with respect to the finite topologies, then the composite map $\tau$ above is continuous. Indeed, the comultiplication $\Delta$ is an algebra homomorphism and thus is continuous, and it follows then that the maps in the above will all be continuous with respect to the finite topologies and topological tensor product. Thus if the module action $\lambda$ is continuous, we have an isomorphism of coalgebras
\[
(A \# H)^\circ \cong A^\circ \otimes^{\tau^\circ} H^\circ.
\]
A general description of the finite dual of a smash product (without restriction on $\lambda$) is given in~\cite[Proposition~11.4.2]{radford:hopf}, which requires a more complicated subcoalgebra $A^{\underline{\circ}} \subseteq A^\circ$ to achieve a similar decomposition. The isomorphism above shows that such technicalities can be avoided if $\lambda$ is sufficiently well-behaved.

In practice, one might wish for more straightforward conditions on the $H$-action that can be verified instead of topological continuity. The next result provides more familiar algebraic properties of the action that are sufficient.
Recall~\cite[Definition~1.7.1]{montgomery:hopf} that the subalgebra of \emph{invariants} of $H$ is
\[
A^H = \{a \in A \mid h(a) = \eps(h)a \mbox{ for all } h \in H\}.
\]
For instance, if $H = kG$ is a group algebra, so that $G$ acts by automorphisms on $A$, then $A^H = A^G$ is the usual subalgebra of $G$-invariants.

\begin{theorem}
Let $H$ be a Hopf algebra and let $A$ be a left $H$-module algebra. Suppose that the following hold:
\begin{enumerate}
\item $A$ is finite over the subalgebra $A^H$ of $H$-invariants;
\item the action of $H$ on $A$ factors through a finite-dimensional Hopf algebra.
\end{enumerate}
Then the twisting map $\tau$ of~\eqref{eq:smash twist} is continuous and there is an isomorphism of coalgebras $(A \# H)^\circ \cong A^\circ \otimes^{\tau^\circ} H^\circ$.
\end{theorem}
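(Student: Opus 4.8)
The plan is to reduce the statement to continuity of the $H$-action and then invoke the machinery set up just before the statement. Recall from that discussion that $\tau$ factors as
\[
\tau \colon H \otimes A \xrightarrow{\Delta \otimes \id_A} H \otimes H \otimes A \xrightarrow{\id_H \otimes \sigma} H \otimes A \otimes H \xrightarrow{\lambda \otimes \id_H} A \otimes H,
\]
that $\Delta$ (being an algebra homomorphism) and $\sigma$ are automatically continuous for the cofinite topologies, and hence that continuity of $\lambda \colon H \otimes^! A \to A$ forces continuity of $\tau$; Theorem~\ref{thm:twisted dual} applied with $B = H$ then yields the cotwisting map $\tau^\circ$ and the isomorphism $(A \# H)^\circ \cong A^\circ \otimes^{\tau^\circ} H^\circ$. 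So the whole problem becomes continuity of $\lambda$, which is where hypothesis~(2) does the work; I will indicate afterwards how hypothesis~(1) permits an alternative argument via Theorem~\ref{thm:centralize}.

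To prove $\lambda$ continuous I would use hypothesis~(2) to fix a surjective Hopf algebra map $\pi \colon H \twoheadrightarrow \bar H$ with $\dim_k \bar H < \infty$ through which the module algebra structure on $A$ factors, and set $N = \ker \pi$, a Hopf ideal with $N \in \F(H)$ and $N \cdot A = 0$. Since the ideals $I \in \F(A)$ form a neighborhood basis of $0$ in $A$, it suffices to exhibit, for each such $I$, a basic open subspace of $H \otimes^! A$ carried into $I$ by $\lambda$. I would take $N \otimes A + H \otimes A_0$, where
\[
A_0 := \{a \in A : H \cdot a \subseteq I\} = \{a \in A : \bar H \cdot a \subseteq I\},
\]
and check that $A_0$ is an $H$-stable ideal of $A$, contained in $I$, of finite codimension: it is a two-sided ideal because $\bar h \cdot (ab) = \sum (\bar h_{(1)} \cdot a)(\bar h_{(2)} \cdot b)$ gives $A_0 A, A A_0 \subseteq A_0$; it is $H$-stable because $\bar g \cdot (\bar h \cdot a) = (\bar g \bar h) \cdot a \in \bar H \cdot a$; it lies in $I$ because $1 \cdot a = a$; and it has finite codimension because it is the kernel of the $k$-linear map $A \to \Hom_k(\bar H, A/I)$, $a \mapsto (\bar h \mapsto \bar h \cdot a + I)$, into a finite-dimensional space. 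Then $\lambda(N \otimes A + H \otimes A_0) = N \cdot A + H \cdot A_0 \subseteq A_0 \subseteq I$, so $\lambda$ is continuous at $0$, hence continuous.

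For the role of hypothesis~(1): one can instead verify the continuity hypothesis of Theorem~\ref{thm:twisted dual} by checking the centralizing condition of Theorem~\ref{thm:centralize}. There one would take $A_0 = A^H$, over which $A$ is finite by~(1), observing that $\tau(h \otimes a) = \sum \eps(h_{(1)}) a \otimes h_{(2)} = a \otimes h$ for $a \in A^H$ so that $\tau$ restricts to the tensor swap on $H \otimes A^H$; and one would take $B_0 = {}^{\mathrm{co}\,\bar H}H = \{h \in H : (\pi \otimes \id_H)\Delta(h) = 1_{\bar H} \otimes h\}$, the subalgebra of $\bar H$-coinvariants, for which the coinvariance identity forces $\tau|_{B_0 \otimes A} = \sigma|_{B_0 \otimes A}$.

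The main obstacle in either approach is the finiteness input. In the first approach it is the claim that $A_0$ has finite codimension, which uses $\dim_k \bar H < \infty$ essentially; in the second it is the claim that $H$ is module-finite over the coinvariant subalgebra ${}^{\mathrm{co}\,\bar H}H$, which is not elementary and must be drawn from faithful-flatness results for Hopf algebra surjections with finite-dimensional cokernel. For this reason I would present the proof along the first route, which is self-contained modulo the reduction already carried out in the text.
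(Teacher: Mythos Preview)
Your argument is correct and in fact takes a cleaner route than the paper. The paper does \emph{not} prove continuity of $\lambda$ directly; instead it follows precisely what you sketch as your ``alternative'' approach, verifying the hypotheses of Theorem~\ref{thm:centralize} with $A_0 = A^H$ and $B_0$ the subalgebra of $\bar H$-coinvariants in $H$, and then appealing to Hopf--Galois theory (finite-dimensional quotient Hopf algebras give Galois extensions, hence $H$ is finitely generated projective over $B_0$) to secure the finiteness of $B_0 \subseteq H$. Your direct continuity argument for $\lambda$ is more elementary---it avoids any Hopf--Galois input---and, notably, uses only hypothesis~(2): the ideal $A_0 = \{a \in A : \bar H \cdot a \subseteq I\}$ has finite codimension simply because $\dim_k \bar H < \infty$ and $I \in \F(A)$, with no reference whatsoever to $A^H$. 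So your main route actually proves the stronger statement that hypothesis~(2) alone already yields the conclusion, making~(1) superfluous. The paper's route, by contrast, genuinely consumes both hypotheses together with an external structural theorem; its only advantage is that it exhibits Theorem~\ref{thm:centralize} at work. Your phrasing that ``hypothesis~(1) permits an alternative argument'' is slightly misleading, since that alternative still needs~(2) for the coinvariant side---but you do acknowledge this when you flag the finiteness of $H$ over $B_0$ as the non-elementary obstacle.
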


\begin{proof}
It follows by the construction of $H$-invariants that the restriction of the twisting map~\eqref{eq:smash twist} to the subspace $H \otimes A^H$ agrees with $\sigma_{H, A^H}$, the ``tensor swap'' map: for $h \in H$ and $a \in A^H$,
\begin{align*}
\tau(h \otimes a) &= \sum h_{(1)}(a) \otimes h_{(2)} \\
&= \sum \epsilon(h_{(1)}) a \otimes h_{(2)} \\
&= a \otimes \left( \sum \, \epsilon(h_{(1)}) h_{(2)} \right) \\
&= a \otimes h.
\end{align*}

Now let $K$ be a finite-dimensional Hopf algebra through which the action of $H$ on $A$ factors; more precisely, there is a Hopf algebra surjection $\pi \colon H \twoheadrightarrow K$ and a $K$-module algebra action $\overline{\lambda} \colon K \otimes A \to A$ so that the action $\lambda$ of $H$ on $A$ factors as 
\[
\lambda \colon H \otimes A \xrightarrow{\pi \otimes \id_A} K \otimes A \xrightarrow{\overline{\lambda}} A.
\]
The surjection $\pi$ makes $H$ into a left $K$-comodule via the coaction 
\[
H \xrightarrow{\Delta} H \otimes H \xrightarrow{\pi \otimes \id_H} K \otimes H,
\] 
which we denote by $\alpha$. 
This data relates to the twisting map of $A \# H$ through the following commuting diagram, where the composite across the top row is equal to $\tau$ and the vertical arrows are induced by $\pi$:
\[
\xymatrix{
H \otimes A \ar[r]^-{\Delta \otimes \id_A} \ar[dr]_{\alpha \otimes \id_A} & H \otimes H \otimes A \ar[r]^-{\id_H \otimes \sigma} \ar@{->>}[d] & H \otimes A \otimes H \ar[r]^-{\lambda \otimes \id_H} \ar@{->>}[d] & A \otimes H \\
 & K \otimes H \otimes A \ar[r]^{\id_K \otimes \sigma} & K \otimes A \otimes H \ar[ur]_{\overline{\lambda} \otimes \id_H} & 
}
\]

Denote the subalgebra of $K$-coinvariants~\cite[Definition~1.7.1]{montgomery:hopf} in $H$ by
\[
H_0 =  \{h \in H \mid \alpha(h) = 1_K \otimes h\}.
\]
Then for any $h \in H_0$ and $a \in A$, an examination of the commuting diagram above reveals that 
\[
\tau(h \otimes a) = 1_K(a) \otimes h = a \otimes h,
\]
so that the restriction of $\tau$ to $H_0 \otimes A$ is equal to $\sigma_{H_0, A}$.
Furthermore, $K$ being finite-dimensional implies~\cite[Theorem~8.2.4]{montgomery:hopf} that $H$ is a $K$-Galois extension of $H_0$, from which we can conclude~\cite[Theorem~1.7 and Corollary~1.8]{kreimertakeuchi} that $H$ is finitely generated (and projective) as both a left and right module over $H_0$.

Thus we have produced subalgebras $A_0 \subseteq A$ and $H_0 \subseteq H$ satisfying the hypotheses of Theorem~\ref{thm:centralize}, from which the desired conclusions follow.
\end{proof}

Historically speaking, the finite dual has mainly been of interest in the the study of Hopf algebras and bialgebras, as mentioned in Section~\ref{sec:intro}. Thus as a final application of these methods we provide sufficient conditions for the dual of a bitwisted tensor product product of two bialgebras algebra to be the bitwisted tensor product of the respective dual bialgebras in Corollary~\ref{cor:crossed bialgebra} below. We thank Hongdi Huang for an insightful question that inspired the result.

\begin{remark}\label{rem:Hopf dual}
It is a well-known fact~\cite[Theorem~9.1.3]{montgomery:hopf} that the finite dual of a bialgebra (resp., Hopf algebra) again has the structure of a bialgebra (resp., Hopf algebra). 
This can be interpreted in terms of topological duality as follows.
Let $H$ be a bialgebra with multiplication $m \colon H \otimes H \to H$ and comultiplication $\Delta \colon H \to H \otimes H$. Endow $H$ with the cofinite topology. We already know from Theorem~\ref{thm:continuous equals sweedler} that $m \colon H \otimes^! H \to H$ is continuous, and its continuous dual yields the comultiplication of $H^\circ$. Because $H$ is a bialgebra, the comultiplication is an algebra homomorphism and therefore is continuous as a map $\Delta \colon H \to H \otimes^! H$. The strong monoidal functor $(-)^\circ \colon \CF_k\op \to \Top_k$ sends comonoids to monoids, so the continuous dual of the comultiplication
\[
\Delta^\circ \colon H^\circ \otimes H^\circ \cong (H \otimes^! H)^\circ \to H^\circ,
\]
is a multiplication. Similarly, the unit and counit of $H$ have continuous duals that provide (co)units for the dual (co)multiplication structures. In this way $H^\circ$ becomes both a coalgebra and an algebra. Because the bialgebra axioms are self-dual (see the diagrams in the proof of \cite[Proposition~3.1.1]{sweedler:hopf}), they pass by continuous duality to $\Delta^\circ$ and $m^\circ$ in in order to show that $H^\circ$ becomes a bialgebra under these structures.
(If $H$ is a Hopf algebra, then its antipode considered as a map $H \to H\op$ is an algebra homomorphism and thus is continuous. So $S \colon H \to H$ is also continuous, allowing us to define $S^\circ \colon H^\circ \to H^\circ$. Again by self-duality of the axioms, this will be an antipode for $H^\circ$, making it into a Hopf algebra.)
\end{remark}

\begin{corollary}\label{cor:crossed bialgebra}
Let $A$ and $B$ be bialgebras, and suppose that $H = A \otimes^\phi_\tau B$ is a bitwisted tensor product. If the twisting and cotwisting maps are continuous 
\begin{align*}
\tau \colon B \otimes^! A &\to A \otimes^! B,\\
\phi \colon A \otimes^! B &\to B \otimes^! A,
\end{align*}
where $A$ and $B$ are equipped with their cofinite topologies, then there is an isomorphism of bialgebras 
\[
H^\circ \cong A^\circ \otimes^{\tau^\circ}_{\phi^\circ} B^\circ.
\]
\end{corollary}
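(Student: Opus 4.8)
The plan is to treat the algebra and coalgebra structures of $H^\circ$ separately: the coalgebra structure is handed to us directly by Theorem~\ref{thm:twisted dual}, while the algebra structure will be obtained by dualizing the comultiplication of $H$, using that $\Delta_H$ is continuous into $H \otimes^! H$ precisely because $\phi$ is continuous. Note first that $A$ and $B$ are bialgebras, so by Remark~\ref{rem:Hopf dual} their finite duals $A^\circ$ and $B^\circ$ are bialgebras (in particular they carry the convolution multiplications $m_{A^\circ}, m_{B^\circ}$), and the bitwisted tensor product $A^\circ \otimes^{\tau^\circ}_{\phi^\circ} B^\circ$ has a chance of being defined once we know $\tau^\circ$ is a cotwisting map and $\phi^\circ$ is a twisting map on these duals.

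Since $\tau$ is continuous, Theorem~\ref{thm:twisted dual} already supplies a coalgebra isomorphism $H^\circ = (A \otimes_\tau B)^\circ \xrightarrow{\sim} A^\circ \otimes^{\tau^\circ} B^\circ$, induced by the homeomorphism $A \otimes^! B \to A \otimes_\tau B = H$ of Proposition~\ref{prop:topological conditions}(ii), which is the identity on underlying vector spaces. So it remains to identify the algebra structure of the bialgebra $H^\circ$ after transporting it along this vector-space identity to $A^\circ \otimes B^\circ$. By Remark~\ref{rem:Hopf dual}, since $H$ is a bialgebra, $\Delta_H \colon H \to H \otimes^! H$ is continuous and its continuous dual is the multiplication of $H^\circ$. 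The key point is that $\Delta_H = \Delta_\phi$ factors, up to the coherence isomorphisms of the symmetric monoidal structure $(\CF_k, \otimes^!, k)$, as
\[
H \cong A \otimes^! B \xrightarrow{\Delta_A \otimes \Delta_B} (A \otimes^! A) \otimes^! (B \otimes^! B) \xrightarrow{\id_A \otimes \phi \otimes \id_B} (A \otimes^! B) \otimes^! (A \otimes^! B) \cong H \otimes^! H,
\]
where in the middle one reshuffles $(A \otimes^! A) \otimes^! (B \otimes^! B)$ into $A \otimes^! (A \otimes^! B) \otimes^! B$ and applies $\phi \colon A \otimes^! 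B \to B \otimes^! A$ in the central slot, and the outer identifications use Proposition~\ref{prop:topological conditions}(ii) together with associativity of $\otimes^!$. Each arrow is continuous: $\Delta_A$ and $\Delta_B$ are continuous by Remark~\ref{rem:Hopf dual}, and $\phi$ is continuous \emph{by hypothesis}. Applying the strong monoidal functor $(-)^\circ$ of Theorem~\ref{thm:tensor dual} to this factorization, and using that the continuous dual of a comultiplication is the convolution product, the multiplication of $H^\circ$ becomes, on $A^\circ \otimes B^\circ$,
\[
(m_{A^\circ} \otimes m_{B^\circ}) \circ (\id_{A^\circ} \otimes \phi^\circ \otimes \id_{B^\circ}),
\]
which is exactly the twisted tensor product multiplication~\eqref{eq:m rho} associated to the map $\phi^\circ \colon B^\circ \otimes A^\circ \to A^\circ \otimes B^\circ$. (That $\phi^\circ$ is a genuine twisting map follows from this identity, or directly: $\phi$ is conormal and comultiplicative by Proposition~\ref{prop:conormal comultiplicative}, hence $\phi^\circ$ is normal and multiplicative by the same formal-duality argument used in the proof of Theorem~\ref{thm:twisted dual}, hence a twisting map by Proposition~\ref{prop:normal multiplicative}.)

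Finally I would assemble the pieces. Transported to $A^\circ \otimes B^\circ$, the bialgebra $H^\circ$ has multiplication $m_{\phi^\circ}$ and comultiplication $\Delta_{\tau^\circ}$, with the expected (co)units (these dualize the unit $\eta_A \otimes \eta_B$ and counit $\eps_A \otimes \eps_B$ of $H$, as in Theorem~\ref{thm:twisted dual} and Remark~\ref{rem:Hopf dual}). Since $H^\circ$ is a bialgebra, these structures automatically satisfy the bialgebra axioms, so by definition $A^\circ \otimes^{\tau^\circ}_{\phi^\circ} B^\circ$ is a well-defined bitwisted tensor product bialgebra and the identity map gives an isomorphism $H^\circ \cong A^\circ \otimes^{\tau^\circ}_{\phi^\circ} B^\circ$ of bialgebras. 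The main obstacle is verifying continuity of $\Delta_\phi \colon H \to H \otimes^! H$: this is the step where the hypothesis that $\phi$ be continuous is genuinely used (continuity of $\tau$ powers Theorem~\ref{thm:twisted dual}, i.e.\ the coalgebra side), and carrying it out requires keeping careful track of which topology each tensor factor carries, passing freely between $A \otimes_\tau B$ and $A \otimes^! B$ via Proposition~\ref{prop:topological conditions}.
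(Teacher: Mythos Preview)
Your proposal is correct and follows essentially the same route as the paper: invoke Theorem~\ref{thm:twisted dual} for the coalgebra structure, then dualize the factorization $\Delta_H = (\id_A \otimes \phi \otimes \id_B) \circ (\Delta_A \otimes \Delta_B)$ through the strong monoidal functor of Theorem~\ref{thm:tensor dual} to identify the multiplication on $H^\circ$ with $m_{\phi^\circ}$. One small expository point: continuity of $\Delta_H \colon H \to H \otimes^! H$ is automatic from Remark~\ref{rem:Hopf dual} (it is an algebra homomorphism), so the genuine role of the hypothesis on $\phi$ is not to make $\Delta_H$ continuous but to make each factor in the displayed composite continuous, so that $(-)^\circ$ can be applied termwise.
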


\begin{proof}
As in Remark~\ref{rem:Hopf dual} above, $H^\circ$ is a bialgebra with multiplication $\Delta^\circ$ and comultiplication $m^\circ$. We know from Theorem~\ref{thm:twisted dual} that as a coalgebra we have $H^\circ \cong A^\circ \otimes^{\tau^\circ} B^\circ$. 
Also, because the comultiplication is given by
\[
\Delta_H = (\id_A \otimes \phi \otimes \id_B) \circ (\Delta_A \otimes \Delta_B),
\]
we are similarly able to apply Theorem~\ref{thm:tensor dual} to show that $\Delta_{H^\circ}$ coincides with the multiplication $m_{\phi^\circ}$ of $A^\circ \otimes_{\phi^\circ} B^\circ$. Indeed, with suitable harmless identifications as in the proof of Theorem~\ref{thm:twisted dual}, we have
\begin{align*}
m_{H^\circ} &= (\Delta_H)^\circ \\
&= ((\id_A \otimes \phi \otimes \id_B) \circ (\Delta_A \otimes \Delta_B))^\circ \\
&= (\Delta_A \otimes \Delta_B)^\circ \circ (\id_A \otimes \phi \otimes \id_B)^\circ \\
&= (\Delta_A^\circ \otimes \Delta_B^\circ) \circ (\id_A^\circ \otimes \phi^\circ \otimes \id_B^\circ) \\
&= (m_{A^\circ} \otimes m_{B^\circ}) \circ (\id_{A^\circ} \otimes \phi^\circ \otimes \id_{B^\circ}) \\
&= m_{\phi^\circ}.
\end{align*}
It now follows that $H^\circ \cong A^\circ \otimes^{\tau^\circ}_{\phi^\circ} B^\circ$ is a bitwisted tensor product. 
\end{proof}

\section{Examples of some noncommutative planes}
\label{sec:examples}

In this final section, we examine some specific cases where Theorem~\ref{thm:twisted dual} can be applied to determine the structure of the finite dual coalgebra of a twisted tensor product. We will focus some of the simplest twisted tensor products, which have the form 
\[
A \otimes_\tau B = k[x] \otimes_\tau k[y]
\] 
where $A = k[x]$ and $B = k[y]$ are polynomial algebras in a single indeterminate, 
such that the hypotheses of Theorem~\ref{thm:centralize} are satisfied.
Specifically, for each the examples considered below there is a positive integer $\ell$ such that 
\begin{equation}\label{eq:ellth powers}
x^\ell \mbox{ and } y^\ell \mbox{ are central in } k[x] \otimes_\tau k[y].
\end{equation} 
If the above is true, then the hypothesis of Theorem~\eqref{thm:centralize} holds for the subalgebras $A_0 = k[x^\ell]$ and $B_0 = k[y^\ell]$.
Therefore $\tau \colon k[y] \otimes^! k[x] \to k[x] \otimes^! k[y]$ is continuous, and the finite dual has the form
\begin{equation}\label{eq:twisted plane}
(k[x] \otimes_\tau k[y])^\circ \cong \Dist(\A^1_k) \otimes^\phi \Dist(\A^1_k)
\end{equation}
where the cotwisting map $\phi$ corresponds to $\tau^\circ \colon k[y]^\circ \otimes k[x]^\circ \to k[x]^\circ \otimes k[y]^\circ$ under the isomorphisms $k[x]^\circ \cong k[y]^\circ \cong \Dist(\A^1_k)$.

If we take $\tau = \sigma_{B,A}$ to be the ordinary tensor swap, then the cotwisting map $\phi$ corresponding to $(\sigma_{B,A})^\circ$ is also a tensor swap. In this case we have $A \otimes B \cong k[x,y]$ and of course
\[
(k[x] \otimes k[y])^\circ \cong \Dist(\A^1_k) \otimes \Dist(\A^1_k) \cong \Dist(\A^2_k).
\]
is the coalgebra of distributions on the affine plane. For other continuous choices of $\tau$, the finite dual of $k[x] \otimes_\tau k[y]$ can thus be viewed as the coalgebra of distributions on the ``noncommutative plane'' for which the twisted tensor product behaves as an algebra of functions.

\separate

Continue to assume~\eqref{eq:ellth powers} holds.
In order to find an explicit representation for $\tau$, it will be advantageous to use the  $\Z/\ell\Z$-grading on the polynomial algebra induced from its natural $\mathbb{N}$-grading via the monoid homomorphism $\mathbb{N} \hookrightarrow \Z \twoheadrightarrow \Z/\ell\Z$ as follows:
\[
k[t] = k[t^\ell] \oplus k[t^\ell]t \oplus \cdots \oplus k[t^\ell]t^{\ell-1}.
\]
The twisting map acts in a predictable way relative to the $\Z/\ell\Z$-gradings on $k[x]$ and $k[y]$. Elements $f \in k[x]$ and $g \in k[y]$ can be decomposed into homogeneous components of the form 
\[
f(x) = \sum_{i=0}^{\ell - 1} f_i x^i \qquad \mbox{and} \qquad
g(y) = \sum_{j=0}^{\ell - 1} g_j y^j,
\]
where each of the terms $f_i \in k[x^\ell]$ and $g_j \in k[y^\ell]$ are central. Then multiplication in $k[x] \otimes_\tau k[y]$ satisfies
\[
gf = \sum_{i,j = 0}^{\ell - 1} (g_j y^j) (f_i x^i) = \sum_{i,j = 0}^{\ell - 1} f_i (y^j x^i) g_j,
\]
so that the twisting map takes on the form
\begin{equation}\label{eq:finite order twist}
\tau(g \otimes f) = \sum_{i,j = 0}^\ell f_i \tau(y^j \otimes x^i) g_j.
\end{equation}
In this way, the twisting map $\tau$ is essentially described by the $\ell^2$ values $\tau(y^j \otimes x^i) \in k[x] \otimes k[y]$ for $i,j = 0, \dots, \ell-1$.

Some of the twisting maps below will be described in terms of $q$-numbers, $q$-factorials, and $q$-binomial coefficients. Because we are interested in evaluating these expressions at roots of unity, we recall their definitions more carefully than usual. Given a nonnegative integer $m$, the following define polynomials in $\Z[t]$ with nonnegative coefficients:
\begin{align*}
[m]_t &= 1 + t + \cdots + t^{m-1}, \\
[m]_t! &= [m]_t \, [m-1]_t \cdots [1]_t.
\end{align*}
Note that $[m]_t = (1-t^m)/(1-t)$.
Similarly, for any nonnegative integer $i \leq m$, it happens that the rational function
\begin{equation}\label{eq:t-binomial}
\genfrac{[}{]}{0pt}{0}{m}{i}_t = \frac{[m]_t!}{[i]_t! \, [m-i]_t!}
\end{equation}
is in fact an polynomial in $\Z[t]$ with nonnegative coefficients, as explained in~\cite[Section~1.7]{stanley}. Now if $q \in k^\times$ is any nonzero value in our field, evaluating the polynomials $[m]_t$ and $[m]_t!$ at $t = q$ respectively yield the \emph{$q$-number} and \emph{$q$-factorial} of $m$,
\begin{align*}
[m]_q &= 
1 + q + \cdots + q^{m-1}, \\
[m]_q ! &= 
 [m]_q \, [m-1]_q \cdots [1]_q, 
\end{align*}
which are elements of $k$. 
To obtain the \emph{$q$-binomial coefficient}, we \emph{first} simplify~\eqref{eq:t-binomial} to an integer polynomial and \emph{then} evaluate
\[
\genfrac{[}{]}{0pt}{0}{m}{i}_q = \left. \genfrac{[}{]}{0pt}{0}{m}{i}_t \, \right|_{t = q}
\]
at $t = q$ to produce an element of $k$. (This order of operations is particularly important if $q$ is a root of unity, which may be a root of the denominator of~\eqref{eq:t-binomial}. By first reducing, we obtain an unambiguous value for the $q$-binomial coefficient in all cases.)

\subsection{Quantum planes}\label{sub:quantum plane}

In this subsection we will analyze the dual coalgebra of the quantum plane at a root of unity. 
This coalgebra was also studied in~\cite[Subsection~4.2]{reyes:qmax} in terms of the representation theory of the quantum plane. Here we describe its comultiplication explicitly by viewing it as a cotwisted tensor product, which provides a satisfying view of the quantum plane as a deformation away from the classical plane. 
Below, we use the term ``deformation'' informally, not referring to a particular theory of algebraic deformation, but rather to mean the intuitive idea of structures that vary in with a change of parameter.

Let $q \in k^\times$. Recall that the (algebra of functions on the) \emph{quantum plane} is the affine domain
\[
\O_q(k^2) = k_q[x,y] = k \langle x, y \mid yx = q  xy \rangle.
\]
This is also the twisted tensor product $A \otimes_{\tau_q} B$ where $A = k[x]$ and $B = k[y]$, with twisting map
\begin{align*}
\tau_q \colon B \otimes A &\to A \otimes B \\
\tau_q(y^i \otimes x^j) &= q^{ij} x^j \otimes y^i.
\end{align*}
If $q$ is not a root of unity, then it is known~\cite[Example~II.1.2]{browngoodearl:quantum} that the only maximal ideals of finite codimenision in $\O_q(k^2)$ are of the form $(x, y-\lambda)$ or $(x-\lambda,y)$ for $\lambda \in k$. Using a strategy similar to that of Example~\ref{ex:not continuous}, one can verify in this case that $k_q[x,y] = A \otimes_{\tau} B$ is not homeomorphic to $A \otimes^! B$, so that $\tau_q$ is not continuous.

Suppose from now on that $q$ is a primitive $\ell$th root of unity, so that $\ell$ does not divide the characteristic of $k$. 
Letting $\theta$ denote the $k$-algebra automorphism of $k[x]$ given by $\theta(x) = qx$, then $\O_q(k^2) \cong k[x][y; \theta]$ satisfies the hypotheses of Corollary~\ref{cor:Ore}, and it follows (as in the proof of that result) that the twisting map $\tau$ above is continuous.
In this case the isomorphism~\eqref{eq:twisted plane} takes on the form 
\[
\O_q(k^2)^\circ \cong \Dist(\A^1_k) \otimes^{\phi_q} \Dist(\A^1_k)
\]
for a cotwisting map $\phi_q$ that corresponds to $\tau_q^\circ$.
Note that the underlying vector space is independent of $q$, while the cotwisting map $\phi_q$ varies with the root of unity~$q$. 

Thus as the algebras $\O_q(k^2)$ are deformed throughout the family, their spectral coalgebras have identical underlying vector spaces and counits, but their comultiplications vary with the choice of parameter~$q$. We interpret this by saying that the linear span of their quantum states remains unchanged, but that the quantum diagonal structure varies with~$q$ and causes the underlying quantum set to deform.  This picture fits quite intuitively within the framework of quantum groups and $q$-deformations~\cite[Chapter~I.1]{browngoodearl:quantum}.
Of course, when $q = 1$ then $\tau^\circ = (\sigma_{B,A})^\circ$ is simply a tensor swap, and the resulting coalgebra $\Dist(\A^1_k) \otimes \Dist(\A^1_k) \cong \Dist(\A^2_k)$ consists of distributions on the plane, recovering the affine plane as the ``limit'' of the quantum planes as $q \to 1$. 

\separate

Next we will describe the cotwisting map $\tau_q^\circ$ as a deformation away from the classical case $q = 1$ where $\tau_1 = \sigma_{B,A} \colon B \otimes A \to A \otimes B$. 
This is done in terms of $q$-numbers, as defined above.
Given $f \in A$ and $g \in B$ with $\Z/\ell\Z$-graded decompositions $f = \sum f_i \in k[x]$ and $g = \sum g_j \in k[y]$, we have
\begin{align*}
\tau(g \otimes f) &= \sum_{i,j = 0}^{\ell-1} \tau(g_j \otimes f_i) \\
&= \sum_{i,j = 0}^{\ell-1} q^{ij} f_i \otimes g_j \\
&= f \otimes g - \sum_{i,j=1}^{\ell-1} (1-q^{ij}) f_i \otimes g_j \\
&= f \otimes g - (1-q) \sum_{i,j=1}^{\ell-1} [ij]_q \, f_i \otimes g_j.
\end{align*}
Thus for $\sigma = \sigma_{B,A}$ we have 
\[
\tau_q = \sigma - (1-q) \xi_q,
\]
where we define the \emph{$q$-twist} $\xi_q \colon k[y] \otimes k[x] \to k[x] \otimes k[y]$ by the formula 
\[
\xi_q(g \otimes f) = \sum_{i,j=1}^{\ell-1} [ij]_q \, f_i \otimes g_j.
\]
(Note that $\xi_1 = 0$ since the sum is empty.) 
Because $\tau_q$ and $\sigma$ are both continuous, the same must be true for $\xi_q$. Then passing to the continuous dual, we have
\[
\tau_q^\circ = \sigma^\circ - (1-q) \xi_q^\circ.
\]
This explicitly represents the (co)twisting maps as deformations away from the classical case as $q$ varies.

\subsection{Quantized Weyl algebras}

Once again let $q \in k^\times$. The first \emph{quantized Weyl algebra} is defined as
\[
A_1^q(k) = k \langle x,y \mid yx - qxy = 1\rangle.
\]
For $q \neq 1$, this is an Ore extension and thus a twisted tensor product of the form 
\[
A^1_q(k) = k[x][y; \theta, \del_q] = k[x] \otimes_{\tau_q} k[y],
\]
 where $\theta$ is again the automorphism $\theta(f(x)) = f(qx)$ and $\del_q$ is the \emph{$q$-derivation} (or \emph{$q$-difference}) operator
\[
\del_q f(x) = \frac{f(qx) - f(x)}{qx - x}.
\]
The $q$-derivative is easily computed on monomials as 
\[
\del_q x^i = [i]_q \, x^{i-1},
\]
from which it is easy to verify the following commutation relation between $\sigma$ and $\del_q$ as linear operators:
\[
\del_q \theta = q \, \theta \del_q.
\]
Note that if $q$ is a primitive $\ell$th root of unity, then we have $\theta^\ell = \id_{k[x]}$ and $\del_q^\ell = 0$.

This $q$-commutation relation 
allows us evaluate the expression~\eqref{eq:Ore twist} for our particular $\theta$ and $\delta = \del_q$ in terms of $q$-binomial coefficients. 
Specifically, for $f \in k[x]$ we have
\begin{equation}\label{eq:q Weyl twist}
\tau_q(y^m \otimes f) = \sum_{i=0}^m \genfrac{[}{]}{0pt}{0}{m}{i}_q \theta^i \, \del_q^{m-i}(f) \otimes y^i,
\end{equation}
which can be verified by induction on $m$ with an application of the $q$-Pascal identity~\cite[(1.67)]{stanley}.

As in the case of the quantum plane, this twisting map is only continuous for those special values of~$q$ that result in a large center.

\begin{lemma}
Let $q \in k^\times$ with $q \neq 1$.
The twist $\tau_q$ of~\eqref{eq:q Weyl twist} is continuous as a map $k[y] \otimes^! k[x] \to k[x] \otimes^! k[y]$ if and only if $q$ is a root of unity. 
\end{lemma}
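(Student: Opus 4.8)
The plan is to prove the two implications separately; the direction ``$\tau_q$ continuous $\Rightarrow$ $q$ a root of unity'' is the substantive one. Suppose first that $q$ is a root of unity. Since $q \neq 1$, it is a primitive $\ell$th root of unity for some $\ell \geq 2$, so $q^\ell = 1$ and $[\ell]_q = 0$. Setting $n = \ell$ in the relations $y x^n = q^n x^n y + [n]_q x^{n-1}$ and $x y^n = q^{-n} y^n x - q^{-n}[n]_q y^{n-1}$ (both obtained by induction from $yx - qxy = 1$) shows that $x^\ell$ and $y^\ell$ are central in $A_1^q(k)$. Hence Theorem~\ref{thm:centralize} applies with $A_0 = k[x^\ell]$ and $B_0 = k[y^\ell]$---this is the situation of Corollary~\ref{cor:Ore}---and in particular $\tau_q$ is continuous.

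Now suppose $q$ is not a root of unity. By Proposition~\ref{prop:topological conditions}, to show $\tau_q$ is not continuous it suffices to show that the identity map $k[x] \otimes^! k[y] \to k[x] \otimes_{\tau_q} k[y] = A_1^q(k)$, which is continuous by Lemma~\ref{lem:compatible topologies}, is not a homeomorphism; for this it is enough to exhibit one basic open subspace of $k[x] \otimes^! k[y]$ that contains no ideal of finite codimension of $A_1^q(k)$. I would take $N = x\,k[x] \otimes k[y] + k[x] \otimes y\,k[y]$, which under the identification $A_1^q(k) = k[x] \otimes k[y]$ is the kernel of the linear functional $c \colon A_1^q(k) \to k$ reading off the coefficient of $1$ in the PBW basis $\{x^i y^j\}$.

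The crux is the following structural claim: \emph{when $q$ is not a root of unity, the element $z := xy + \frac{1}{q-1}$ acts nilpotently in every finite-dimensional representation of $A_1^q(k)$.} I would prove this in two steps. First, in a nonzero finite-dimensional representation $V$ with $X, Y$ representing $x, y$, the operator $X$ is invertible: its generalized $0$-eigenspace $V_0 = \bigcup_N \ker(X^N)$ is $X$-stable, and is $Y$-stable as well (using $X^N Y = q^{-N} Y X^N - q^{-N}[N]_q X^{N-1}$, so that $X^{N+1}(Yv) = -q^{-N}[N]_q X^N v = 0$ whenever $X^N v = 0$), hence a subrepresentation; but a nonzero representation on which $X$ acts nilpotently, say of index $m$ (necessarily $m \geq 2$, since $X = 0$ is incompatible with $YX - qXY = 1$), forces $0 = YX^m = q^m X^m Y + [m]_q X^{m-1} = [m]_q X^{m-1}$ and hence $[m]_q = 0$, i.e.\ $q^m = 1$---impossible---so $V_0 = 0$. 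Second, the operator $T := Y + \frac{1}{q-1} X^{-1}$ satisfies $TX = qXT$, so conjugation by the invertible $X$ on $\End_k(V)$ scales $T$ by $q^{-1}$, hence scales $T^n$ by $q^{-n}$; since the scalars $q^{-n}$ ($n \geq 0$) are pairwise distinct while a $k$-linear endomorphism of the finite-dimensional space $\End_k(V)$ has only finitely many eigenvalues, $T^n = 0$ for $n \gg 0$. As $z$ is represented by $XT = XY + \frac{1}{q-1}$ and $(XT)^N = q^{\binom{N}{2}} X^N T^N$, the claim follows.

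Granting the claim, applying it to the left regular representation on $A_1^q(k)/K$ for any ideal $K$ of finite codimension gives $z^M \in K$ for some $M \geq 1$. On the other hand, under the $\Z$-grading of $A_1^q(k)$ with $\deg x = 1$ and $\deg y = -1$, the degree-zero component is the polynomial subalgebra $k[xy]$, and since $(xy)^n \in x\,A_1^q(k) \subseteq \ker c$ for all $n \geq 1$, the restriction $c|_{k[xy]}$ is the algebra homomorphism sending $xy \mapsto 0$. Therefore $c(z^M) = c(z)^M = (q-1)^{-M} \neq 0$, so $z^M \notin \ker c = N$, and no ideal of finite codimension of $A_1^q(k)$ is contained in $N$. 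Hence $N$ is not open for the cofinite topology on $A_1^q(k)$, and $\tau_q$ is not continuous. The main obstacle is the structural claim above; within it the delicate point is the invertibility of $X$---equivalently, the nonexistence of finite-dimensional representations of $A_1^q(k)$ on which $x$ acts non-invertibly---which hinges on the observation that nilpotence of $X$ of index $m$ would force $[m]_q = 0$, hence $q^m = 1$.
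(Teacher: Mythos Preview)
Your proof is correct and follows essentially the same strategy as the paper: for the root-of-unity direction you use centrality of $x^\ell$ and $y^\ell$ together with Theorem~\ref{thm:centralize}, and for the other direction you show that the element $z = xy + \tfrac{1}{q-1}$ (which equals the paper's $u = xy - \tfrac{1}{1-q}$) acts nilpotently on every finite-dimensional module, so every cofinite ideal contains a power of $z$, while no power of $z$ lies in the kernel of the constant-term functional.

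The one genuine difference is that the paper outsources the nilpotence of $z$ to a reference (Jordan's observation that $u$ annihilates all simple modules), whereas you supply a self-contained argument: first the invertibility of $X$ via the identity $YX^m = q^m X^m Y + [m]_q X^{m-1}$, and then the elegant eigenvalue trick on conjugation by $X$ applied to $T = Y + \tfrac{1}{q-1}X^{-1}$, forcing $T$ to be nilpotent since the scalars $q^{-n}$ are pairwise distinct. This buys independence from the literature and makes the proof fully elementary. Your choice of open subspace $N = xk[x] \otimes k[y] + k[x] \otimes yk[y]$ is also stated more precisely than the paper's shorthand, and your verification that $c|_{k[xy]}$ is multiplicative (via $(xy)^n \in x\,A_1^q(k)$) gives a clean computation of $c(z^M) = (q-1)^{-M}$.
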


\begin{proof}
First suppose $q$ is not a root of unity. Then as described in~\cite[Example~4.1]{jordan:simple} the normal element $u = xy - 1/(1-q) = q^{-1}(yx - 1/(1-q))$ annihilates all simple right $A^1_q$-modules. It follows that $u$ acts nilpotently on all finite-dimensional $A^1_q$-modules, so that every open neighborhood of zero in the finite topology of $A^1_q$ contains a power of $u$. 
It is straightforward to verify that when every power $(xy)^n$ is written in normal form in terms of monomials $x^iy^j$, it has zero constant term. It follows that the constant term of each $u^n$, written in normal form, is nonzero. 

If $\tau_q$ were continuous, then by Proposition~\ref{prop:topological conditions} the natural map 
\[
k[x] \otimes^! k[y] \to k[x] \otimes_{\tau_q} k[y] = A^1_q
\]
given by the identity on underlying vector spaces
would be a homeomorphism, where all three algebras are equipped with their finite topologies. 
This would mean that the subspace $(x) \otimes (y)$ is open. But every element of this subspace has zero constant term when written in normal form, so it cannot contain any power of $u$. As this contradicts the above, it follows that $\tau_q$ is not continuous.

On the other hand, if $q$ is a primitive $\ell$th root of unity with $\ell > 1$, it is known that $x^\ell$ and $y^\ell$ are central in $A^1_q(k)$. (Indeed, $x^\ell$ is central because $\theta(x^\ell) = q^\ell x^\ell = x^\ell$ and $\del_q(x^\ell) = [\ell]_q \, x^{\ell-1} = 0$. With a bit more effort, evaluating~\eqref{eq:q Weyl twist} in the case $m = \ell$ one can see that all terms vanish except for $i = \ell$, demonstrating that $y^\ell$ is central.)
So~\eqref{eq:ellth powers} is satisfied, which implies that $\tau_q$ is continuous.
\end{proof}

Although the finite dual of the quantum plane could be expressed relatively simply in terms of the $q$-twist map, there does not seem to be such a straightforward expression in the case of quantized Weyl algebras. However, one can attempt to moderately simplify the formula~\eqref{eq:q Weyl twist} for $\tau_q$ using the method of~\eqref{eq:finite order twist}, as we now discuss.

Assume that $\tau_q$ is continuous, so that $q$ is an $\ell$th root of unity for $\ell \geq 2$. 
First notice that for integers $m,n \geq $, applying~\eqref{eq:q Weyl twist} for $f(x) = x^n$ gives
\begin{align*}
\tau_q(y^m \otimes x^n) &= \sum_{i=0}^m \genfrac{[}{]}{0pt}{0}{m}{i}_q \theta^i \, \del_q^{m-i}(x^n) \otimes y^i, \\
&= \sum_{i=0}^m \genfrac{[}{]}{0pt}{0}{m}{i}_q [n]_q! \, \cdots \, [n-m+i-1]_q !  \, \theta^i ( x^{n - m + i} ) \otimes y^i \\
&= \sum_{i=0}^m \genfrac{[}{]}{0pt}{0}{m}{i}_q \, \genfrac{[}{]}{0pt}{0}{n}{m-i}_q [i]_q! \, q^{i(n-m+i)} x^{n - m + i} \otimes y^i,
\end{align*}
For integers $m$, $n$, and $i \leq m$ as above, let us denote
\[
c_{m,n}^i =  \genfrac{[}{]}{0pt}{0}{m}{i}_q \, \genfrac{[}{]}{0pt}{0}{n}{m-i}_q [i]_q! \, q^{i(n-m+i)} \in k
\]

For arbitrary integers $m,n \geq 0$, use the division algorithm to write $m = m_1 \ell + m_0$ and $n = n_1 \ell + n_0$. Following the same reasoning as~\eqref{eq:finite order twist} and applying the formula above, we obtain
\begin{align*}
\tau(y^m \otimes x^n) &= x^{m_1 \ell} \, \tau(y^{m_0} \otimes x^{n_0}) \, y^{m_1 \ell} \\
&= \sum_{i=0}^{m_0} c_{m_0, n_0}^i \, x^{n_1 \ell + n_0 -  m_0 + i} \otimes y^{m_1 \ell + i} \\
&= \sum_{i=0}^{m_0} c_{m_0, n_0}^i \, x^{n -  m_0 + i} \otimes y^{m - m_0 + i}. 
\end{align*}
Although the sum above is still complicated, it has the advantage that the number of terms in the sum is bounded by~$\ell$.

\subsection{Jordan plane and Weyl algebra in positive characteristic}

We close with a brief discussion of two more examples in which the continuity of the twisting map depends on the characteristic of the base field rather than a ``quantizing'' parameter. 

Recall that the \emph{Jordan plane} is the algebra
\[
J = k \langle x, y \mid yx - xy = y^2 \rangle.
\]
This is again a twisted tensor product of the form $J = k[x] \otimes_\tau k[y]$, where $\tau$ is explicitly described in~\cite[Proposition~1.2]{shirikov:twogenerated} as follows:
\begin{equation}\label{eq:Jordan twist}
\tau(y^m \otimes x^n) = \sum_{i=0}^n \binom{n}{i} \frac{(m+n-i-1)!}{(m-1)!} x^i \otimes y^{m+n-i} 
\end{equation}
Note that the expression $\frac{(m+n-i-1)!}{(m-1)!}$ defines an integer. Viewed this way, the above formula is valid both in characteristic zero and positive characteristic.

\begin{lemma}
The twisting map $\tau$ of~\eqref{eq:Jordan twist} is continuous as a map $k[y] \otimes^! k[x] \to k[x] \otimes^! k[y]$ if and only if $k$ has  positive characteristic. 
\end{lemma}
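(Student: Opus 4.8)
The plan is to treat the two implications separately via Proposition~\ref{prop:topological conditions}.

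Suppose first that $\ch k = p > 0$. I would show that $x^p$ and $y^p$ are central in $J = k[x]\otimes_\tau k[y]$, so that condition~\eqref{eq:ellth powers} holds with $\ell = p$ and continuity follows from Theorem~\ref{thm:centralize} applied to $A_0 = k[x^p]$ and $B_0 = k[y^p]$ (finite extensions, each being free of rank $p$). Specializing~\eqref{eq:Jordan twist} to $(m,n) = (1,p)$ and to $(p,1)$ gives
\[
\tau(y\otimes x^p) = \sum_{i=0}^{p}\tfrac{p!}{i!}\,x^i\otimes y^{p+1-i}, \qquad \tau(y^p\otimes x) = p\,(1\otimes y^{p+1}) + x\otimes y^p.
\]
Modulo $p$, the coefficient $p!/i!$ vanishes for every $i<p$ (it is a product of consecutive integers ending in $p$) and the coefficient $p$ vanishes, so these reduce to $\tau(y\otimes x^p) = x^p\otimes y$ and $\tau(y^p\otimes x) = x\otimes y^p$. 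Thus $x^p$ commutes with $y$ and $y^p$ commutes with $x$; since $x$ and $y$ generate $J$ as an algebra, both $x^p$ and $y^p$ are central, whence the restrictions of $\tau$ to $k[y]\otimes k[x^p]$ and $k[y^p]\otimes k[x]$ coincide with the tensor swap, verifying the hypotheses of Theorem~\ref{thm:centralize}.

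Now suppose $\ch k = 0$; I claim $\tau$ is not continuous, and the crucial point is that $y$ acts nilpotently on every finite-dimensional $J$-module. Using a composition series it is enough to verify this on a simple module $V$; write $X,Y$ for the actions of $x,y$. If $Y$ were invertible, multiplying the relation $YX-XY = Y^2$ on the left and right by $Y^{-1}$ would give $XY^{-1}-Y^{-1}X = \id_V$, which is impossible in characteristic zero since the left side has trace $0$ while the right side has trace $\dim_k V\neq 0$. Hence $\ker Y\neq 0$, and $\ker Y$ is a $J$-submodule because $Y(Xv) = (XY+Y^2)v = 0$ whenever $Yv = 0$; by simplicity $Y = 0$. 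Consequently, for any $I\in\F(J)$ the element $\bar y\in J/I$ acts as zero on every simple $J/I$-module, so it lies in the Jacobson radical of $J/I$, which is nilpotent since $J/I$ is finite-dimensional; hence $y^M\in I$ for some $M\geq 1$.

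Granting this, suppose $\tau$ were continuous. By Proposition~\ref{prop:topological conditions} the identity map $k[x]\otimes^! k[y]\to J$ is then a homeomorphism. Since $(x)\in\F(k[x])$ and $(y-1)\in\F(k[y])$, the subspace $N := (x)\otimes k[y] + k[x]\otimes(y-1)$ --- which is the kernel of the surjection $k[x]\otimes k[y]\to k[x]/(x)\otimes k[y]/(y-1)\cong k$ --- is a basic open subspace of $k[x]\otimes^! k[y]$, hence open in the cofinite topology of $J$, and therefore contains some $I\in\F(J)$. But the functional $\rho$ with $\ker\rho = N$ sends $f(x)\otimes g(y)$ to $f(0)g(1)$, so $\rho(y^n) = 1$ for all $n\geq 0$; taking $M$ with $y^M\in I\subseteq N = \ker\rho$ forces $\rho(y^M) = 0$, a contradiction. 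Therefore $\tau$ is not continuous.

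The step I expect to be the main obstacle is the characteristic-zero claim that $y$ acts nilpotently on all finite-dimensional $J$-modules (equivalently, that $y$ is topologically nilpotent for the cofinite topology on $J$); the remainder is routine bookkeeping with Proposition~\ref{prop:topological conditions} and Theorem~\ref{thm:centralize}. An alternative derivation of that claim runs through the identity $[x,y^n] = -n\,y^{n+1}$ together with the vanishing of traces of commutators on a finite-dimensional space.
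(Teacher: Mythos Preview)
Your proof is correct and follows essentially the same strategy as the paper: in positive characteristic you show $x^p$ and $y^p$ are central and invoke Theorem~\ref{thm:centralize}, while in characteristic zero you show $y$ acts nilpotently on every finite-dimensional $J$-module and then exhibit an open set of $k[x]\otimes^! k[y]$ containing no power of $y$, contradicting Proposition~\ref{prop:topological conditions}(ii). The paper does exactly this, citing \cite{shirikov:twogenerated} for centrality and \cite[Lemma~2.1]{iyudu} for nilpotency of $y$.

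The only substantive difference is that you supply self-contained arguments where the paper cites external sources. Your direct computation of $\tau(y\otimes x^p)$ and $\tau(y^p\otimes x)$ from~\eqref{eq:Jordan twist} is routine but worth having explicitly. More notably, your proof that $Y$ is nilpotent on a simple module---observing that invertibility of $Y$ would force $[X,Y^{-1}]=\id_V$, impossible by a trace count in characteristic zero, and then checking that $\ker Y$ is a submodule---is an elegant and elementary replacement for the citation to~\cite{iyudu}. This makes your version more self-contained at essentially no cost.
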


\begin{proof}
If $\ch k = p > 0$, then it is shown in~\cite{shirikov:twogenerated} that $x^p$ and $y^p$ are central in $R$. In this case, $\tau$ is continuous.

So assume $\ch k = 0$. To prove that $\tau$ is not continuous, it suffices by Proposition~\ref{prop:topological conditions} to show that the naturally induced map 
\[
k[x] \otimes^! k[y] \to k[x] \otimes_\tau k[y] = J
\] 
is not a homeomorphism, where all three algebras are equipped with their cofinite topologies. It is shown in~\cite[Lemma~2.1]{iyudu} that $y$ acts nilpotently on every finite-dimensional representation of $J$. (While that reference requires the field to be algebraically closed, its proof is still effective for an arbitrary field $k$ after embedding it into an algebraic closure.) This means that every open ideal in $J$ contains a power of~$y$. Under the vector space identification $J = k[x] \otimes k[y]$, this means that every open subspace of $J$ contains $1 \otimes y^m$ for some $m \geq 0$.

On the other hand, for $\lambda \in k^\times$,the open subspace $(x) \otimes k[y] + k[x] \otimes (y - \lambda) \subseteq k[x] \otimes^! k[y]$ corresponds under the vector space identification $k[x] \otimes k[y] = k[x,y]$ to the ideal $(x,y-\lambda) \subseteq k[x,y]$. This ideal of the ordinary polynomial algebra contains no powers of $y$. So we have an open subspace of $k[x] \otimes^! k[y]$ that contains no element of the form $1 \otimes y^m$. So the continuous map $k[x] \otimes^! k[y] \to J$ cannot be a homeomorphism.
\end{proof}

In the case of a continuous twist, we may again apply the same reasoning as~\eqref{eq:finite order twist}. Suppose that $\ch k = p > 0$. For positive integers $m$, $n$, and $i < n$, define
\[
c_{m,n}^i = \binom{n}{i} \frac{(m+n-i-1)!}{(m-1)!} \in \mathbb{N}.
\]
Given arbitrary positive integers $m$ and $n$, use the division algorithm to write $m = m_1 p + m_0$ and $n = n_1p + n_0$. Then from~\eqref{eq:finite order twist} and~\eqref{eq:Jordan twist} we have
\begin{align*}
\tau(y^m \otimes x^n) &= x^{n_1 p} \, \tau(y^{m_0} \otimes x^{n_0}) \, y^{m_1 p} \\
&= \sum_{i=0}^{n_0} c_{m_0, n_0}^i \, x^{n_1 p + i} \otimes y^{m_1 p + m_0 + n_0 - i} \\
&= \sum_{i=0}^{n_0} c_{m_0, n_0}^i \, x^{n - n_0 + i} \otimes y^{m + n_0 - i},
\end{align*}
expressing the twist on pure tensors of monomials as a sum with at most~$p$ terms.

\separate

Finally, let $\tau$ from Example~\ref{ex:not continuous} be the twist $\tau$ producing the Weyl algebra $A^1(k) = k[x] \otimes_\tau k[y]$. Recall from that example that $\tau$ is not continuous if $k$ has characteristic zero. However, if $k$ has positive characteristic~$p$, then it is well known that $x^p$ and $y^p$ are central elements of $A^1(k)$. In this case, $\tau$ is continuous and 
\[
A^1(k)^\circ \cong \Dist(\A^1_k) \otimes^\phi \Dist(\A^1_k)
\]
where $\phi$ corresponds to $\tau^\circ$. 

If $\ch k = p > 0$, we can once more apply the method of~\eqref{eq:finite order twist}. Let $m$ and $n$ be nonnegative integers. This time if we set
\begin{align*}
c^i_{m,n} &= \binom{m}{i} \, n \, (n-1) \cdots (n-m+i-1) \\ 
&= \binom{m}{i} \binom{n}{m-i} \, i! \in \mathbb{N}
\end{align*}
for $i < m$,
then 
the twist can once more be expressed on monomials as follows. Use the division algorithm to write $m = m_1 p + m_0$ and $n = n_1p + n_0$. Then we have
\begin{align*}
\tau(y^m \otimes x^n) &= x^{n_1 p} \tau(y^{m_0} \otimes x^{n_0}) y^{m_1 p} \\
&= \sum_{i=0}^{m_0} c_{m_0, n_0}^i \, x^{n_1 p + n_0 - m_0 + i} \otimes y^{m_1 p + i} \\
&= \sum_{i=0}^{m_0} c_{m_0, n_0}^i \, x^{n - m_0 + i} \otimes y^{m -m_0 + i},
\end{align*}
where again the number of terms is at most~$p$.

\bibliography{twisted-dual-v2}

\end{document}